\newtheorem{theorem}{Theorem}[section]
\newtheorem{prop}[theorem]{Proposition}
\newtheorem{lemma}[theorem]{Lemma}
\newtheorem{fact}[theorem]{Fact}
\newtheorem{coro}[theorem]{Corollary}
\theoremstyle{definition}
\newtheorem{definition}[theorem]{Definition}
\newtheorem{example}[theorem]{Example}
\newtheorem{remark}[theorem]{Remark}
\newcommand{\ts}{\hspace{0.5pt}}
\newcommand{\nts}{\hspace{-0.5pt}}
\newcommand{\AAA}{\mathbb{A}}
\newcommand{\RR}{\mathbb{R}\ts}
\newcommand{\NN}{\mathbb{N}}
\newcommand{\PP}{\mathbb{P}}
\newcommand{\XX}{\mathbb{X}}
\newcommand{\YY}{\mathbb{Y}}
\newcommand{\cA}{\mathcal{A}}
\newcommand{\cB}{\mathcal{B}}
\newcommand{\cM}{\mathcal{M}}
\newcommand{\cP}{\mathcal{P}}
\newcommand{\cU}{\ts\ts\mathcal{U}}
\newcommand{\ee}{\ts\mathrm{e}}
\newcommand{\nexp}{\mathrm{Exp}}
\newcommand{\nlog}{\mathrm{Log}}
\newcommand{\dd}{\,\mathrm{d}}
\newcommand{\one}{\mathbbm{1}}
\newcommand{\trans}{{\scriptscriptstyle \mathsf{T}}}
\newcommand{\udo}[1]{\underaccent{$\text{.}$}{#1\ts}\nts}
\newcommand{\exend}{\hfill$\Diamond$}
\newcommand{\pa}{\phantom{a}}
\newcommand{\Mat}{\mathrm{Mat}}
\newcommand{\defeq}{\mathrel{\mathop:}=}
\newcommand{\eqdef}{=\mathrel{\mathop:}}
\newcommand{\myfrac}[2]{\frac{\raisebox{-2pt}{$#1$}}
  {\raisebox{0.5pt}{$#2$}}}
\begin{document}

\title[Coupon collection and Markov embedding]
 {A multiple coupon collection process\\[2mm]
  and its Markov embedding structure}

\author{Ellen Baake}
\address{Technische Fakult\"at, Universit\"at Bielefeld, 
         Postfach 100131, 33501 Bielefeld, Germany}

\author{Michael Baake}
\address{Fakult\"at f\"ur Mathematik, Universit\"at Bielefeld, 
         Postfach 100131, 33501 Bielefeld, Germany}

\begin{abstract} 
  The embedding problem of Markov transition matrices into 
   continuous-time Markov semigroups is a classic problem that
  regained a lot of impetus and activities in recent years.  We
  consider it here for the following generalisation of the well-known
  coupon collection process: from a finite set of distinct objects, a
  subset is drawn repeatedly according to some probability
  distribution, independently and with replacement, and each time
  united with the set of objects sampled so far. We derive and
  interpret properties of and explicit conditions for the resulting
  discrete-time Markov chain to be representable within a semigroup or
  a flow of a continuous-time process of the same type.
\end{abstract}

\keywords{Markov chains, coupon collection, embedding problem,
                  incidence algebra}
\subjclass[2010]{60J10, 60J27, 15A16, 06A07}

\maketitle

\section{Introduction}
The classic coupon collector's problem (CCP) starts from a set
$S = \{1,2,\ldots, N \}$ of distinct types of objects (coupons). The
objects are hidden in breakfast cereal boxes --- exactly one in each
box, its type distributed according to $\cU(S)$, the uniform
distribution on $S$, independently for every box. The collector opens
one box at a time, that is, he samples with replacement from $\cU(S)$,
and stops when he has all types in $S$. One is typically interested in
the distribution of the number of boxes opened, that is, the number of
samples required.

The classic CCP has been generalised in various ways.  Neal
\cite{Neal} investigates the situation that the objects may have
different frequencies (and may be absent), so that $\cU(S)$ is
replaced by a discrete distribution $p = (p_i)_{i \in S \cup \{0\}}$
on $S \cup \{0\}$ with $0<p_i < 1$ for $i \in S \cup \{0\}$ and 
  $\sum_{i=0}^N p_i=1$.  Schilling and Henze \cite{SchillingHenze}
  allow for a fixed number $1 \leqslant k < N$ of different objects
per box, uniformly on the set of subsets of $S$ of size $k$, or
slightly non-uniformly.
  
Here, we consider an even more general setting, the \emph{multiple
  coupon collection process} (MCCP), which we define as follows.
Every box contains a random subset of $S$, where subset
$K \subseteq S$ is present with probability $p^{\pa}_K$, with
$\sum_{K \subseteq S} p^{\pa}_K = 1$, again independently for every
box. Our interest here is in the discrete-time Markov chain
$X=(X_n)_{n \in \NN_0}$ on the set of subsets of $S$, where $X_n$ is
the set of types collected until step $n$.  While we do not consider
the usual stopping time problem, we are concerned with whether $X$ is
\emph{embeddable}, that is, whether there is a continuous-time Markov
chain $(Y^{\pa}_t)^{\pa}_{t \in \RR_{\geqslant 0}}$ such that
$\PP(Y^{\pa}_{1}=J \mid Y^{\pa}_{0}=I) = \PP(X^{\pa}_{1}=J \mid X^{\pa}_{0}=I)$
for all $I,J \subseteq S$.  Put differently, we ask the question under
which conditions on the distribution $p = (p^{\pa}_K)^{\pa}_{K \subseteq S}$
the Markov transition matrix of $X$ occurs in a time-homogeneous
Markov semigroup of the form $\{ \ee^{t \ts Q} : t \geqslant 0\}$ for
some Markov generator $Q$. More generally, one can also consider the
time-inhomogeneous situation of a general Markov flow; compare
\cite{Joh, BS4} and references therein. For our special matrix family,
this extension does not seem to provide extra insight, wherefore we
only touch upon it briefly.

Concrete and exhaustive criteria for a discrete-time Markov chain to
be embeddable this way are known for state spaces of up to cardinality
$4$; see \cite{CFR, BS3} and references therein. Beyond this, they are
restricted to specific families of Markov matrices; see \cite{BS2,
  BS4} for examples. The MCCP enriches this collection by an
interesting member for which a complete, yet non-trivial answer can be
given. This is partly due to the fact that the transition matrix is of
triangular form, and partly due to a fruitful combination of
algebraic, combinatorial and probabilistic tools. The MCCP thus adds
relevant insight into the embeddability problem, a topic that has a
long history \cite{Elfving, King, Joh} and currently receives
increased attention \cite{Davies, ER, BS2}.  \smallskip

The paper is organised as follows. We begin by recalling some material
from combinatorics and linear algebra in
Section~\ref{sec:prelim}. Then, the MCCP is described in more detail
in Section~\ref{sec:mccp}, with some emphasis on the defining
properties and their consequences. The corresponding families of
Markov matrices and generators are analysed in
Section~\ref{sec:matrices}, which establishes the core for the
embedding problem. Section~\ref{sec:inter} then dives a little deeper
into the algebraic, combinatorial and analytic structures via the
underlying moduli spaces for Markov matrices and generators. Finally,
Section~\ref{sec:pos} analyses the positivity conditions needed for
embeddability, which leads to a clear answer for this family of
processes.

\section{Preliminaries}\label{sec:prelim}

Below, we need some rather classic notions and results from
combinatorics and linear algebra. Since they will be combined in a
somewhat unusual manner, we recall a few properties of partitions and
matrices, while introducing our notation at the same time. While
we concentrate on cases that we need, our formulation is chosen with
an eye on potential generalisations to similar problems for more
general order lattices.

Let $S$ be a finite set, and consider the lattice $\cP (S)$ of
partitions of $S$; see \cite{Aigner} for background.  Here, we write a
partition of $S$ as $\cA = \{ A_{1}, \dots , A_{m} \}$, where
$m = |\cA|$ is the number of its (non-empty) parts (also called
blocks), and one has $A_{i} \cap A_{j} = \varnothing$ for all $i\ne j$
together with $A_{1} \cup \dots \cup A_{m} = S$.  Below, we shall need
a specific combinatorial identity, which we state and prove for lack
of reference and convenience of the reader.

\begin{fact}\label{fact:part-sum}
  If\/ $S$ is a non-empty finite set and\/ $\cP (S)$ its partition
  lattice, one has the identity
\[
  \sum_{\cA \in \cP (S)} (-1)^{\lvert \cA \rvert} \lvert
    \cA \rvert \ts !  \, = \, (-1)^{\lvert S \rvert} .
\]   
\end{fact}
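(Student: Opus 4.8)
The plan is to reduce the statement to a standard identity for Stirling numbers of the second kind and then prove it by a short induction. Write $n = |S|$ and group the partitions $\cA \in \cP(S)$ according to their number of blocks $k = |\cA|$. Since the number of partitions of an $n$-element set into exactly $k$ non-empty blocks is the Stirling number of the second kind $\left\{{n \atop k}\right\}$, the left-hand side becomes
\[
  \sum_{k=1}^{n} (-1)^{k} \ts k! \left\{{n \atop k}\right\},
\]
and the claim is that this alternating sum equals $(-1)^{n}$. (The quantity $k!\left\{{n\atop k}\right\}$ is, incidentally, the number of surjections from an $n$-set onto a $k$-set, which suggests an inclusion--exclusion reading, but I will not need this interpretation.)

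First I would set $a_n \defeq \sum_{k=1}^{n} (-1)^{k} k! \left\{{n \atop k}\right\}$ and verify the base case $a_1 = -1 = (-1)^1$ directly. The core step is to insert the Stirling recurrence $\left\{{n \atop k}\right\} = k \left\{{n-1 \atop k}\right\} + \left\{{n-1 \atop k-1}\right\}$ into $a_n$, splitting it into two sums. In the sum carrying the term $\left\{{n-1 \atop k-1}\right\}$ I would reindex by $j = k-1$; after pulling out the sign and factorial this produces $-\sum_j (-1)^{j} (j+1)\,j! \left\{{n-1 \atop j}\right\}$, while the first sum reads $\sum_k (-1)^{k} k\,k! \left\{{n-1 \atop k}\right\}$. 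Adding them over a common index, the bracket $k - (k+1) = -1$ collapses everything to $a_n = -a_{n-1}$, and the base case then gives $a_n = (-1)^n$ by induction.

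The step most in need of care is the bookkeeping of the index ranges in this manipulation: I must check that the boundary contributions do not spoil the cancellation, which they do not, because $\left\{{m \atop 0}\right\} = 0$ for $m \geqslant 1$ and $\left\{{n-1 \atop k}\right\} = 0$ for $k \geqslant n$, so both sums may be taken over the common range $1 \leqslant k \leqslant n-1$ without altering their values. As an independent check, I would note the generating-function route: since $\sum_{n} \left\{{n \atop k}\right\} x^n / n! = (\ee^{x}-1)^k / k!$, the exponential generating function of $a_n$ is $\sum_{k \geqslant 0} (1 - \ee^{x})^{k} = \ee^{-x}$, the geometric series converging for $x$ near $0$; reading off the coefficient of $x^n/n!$ recovers $a_n = (-1)^n$. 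This second approach is shorter to state but needs a convergence and coefficient-extraction justification that the induction sidesteps, so I would present the induction as the main argument.
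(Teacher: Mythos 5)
Your induction is correct and is in substance the paper's own proof: the paper also argues by induction on $\lvert S\rvert$, splitting the partitions of $S\cup\{e\}$ according to whether the new element forms a singleton block or joins one of the $\lvert\cA\rvert$ existing blocks --- which is exactly the combinatorial content of the Stirling recurrence you insert --- and both arguments collapse to the same recursion $a_n=-a_{n-1}$. The exponential-generating-function computation you sketch as a secondary check would be a genuinely different route, but since you present the induction as the main argument, your proof and the paper's coincide.
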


\begin{proof}
  This can be proved by induction in the cardinality of $S$. When
  $\lvert S \rvert = 1$, there is only the trivial partition of $S$,
  and both sides of the formula evaluate to $-1$.
   
  Assume that the identity holds for $\lvert S \rvert = n$ and
  consider the set $S' = S \cup \{ e \}$ with one new element, $e$,
  which now allows the following induction argument. The partitions of
  $S'$ come in two types. Either, $\{ e \}$ forms a part of its own
  and thus augments a partition of $S$ by this part, or we have a
  partition of $S$ where $e$ is added to one of its existing parts,
  for which there are as many choices as there are parts.
   
  This means that we can evaluate our new sum via two separate 
  sums over $\cP (S)$ as
\[
\begin{split}
  \sum_{\cB \in \cP (S')} (-1)^{\lvert \cB \rvert} \lvert \cB \rvert
  \ts !  \, & = \! \sum_{\cA\in\cP (S)} \! (-1)^{\lvert \cA \rvert +
    1} \bigl( \lvert \cA \rvert +1 \bigr) !  \;\, + \!
  \sum_{\cA \in \cP (S)} \!  (-1)^{\lvert \cA \rvert} \lvert \cA \rvert
      \ts ! \cdot \lvert \cA \rvert \\[1mm]
  & = \, - \sum_{\cA\in\cP(S)} (-1)^{\lvert \cA \rvert} \lvert \cA
      \rvert \ts !  \, = \, - (-1)^{\lvert S \rvert} \, = \,
      (-1)^{\lvert S' \rvert} ,
\end{split}   
\]   
which completes the induction step and thus the proof.
\end{proof}

The lattice most relevant to us here is the family of subsets of
a given finite set, with the partial order of set inclusion. Here,
we consider $S=\{1, 2, \ldots , N \}$ and its $2^N$ subsets, which
form the power set lattice, denoted by $2^{S}$, where we use
$\subseteq$ for the partial order between two sets. Here,
$I \subset J$ is then used for $I \subseteq J$ with $I\ne J$, and we
write $J-I$ for $J\setminus I$ whenever $I\subseteq J$ for
simplicity. Further, $\lvert K \rvert$ denotes the cardinality of $K$,
and $\,\overline{\! K} = S - K$ is the complement of $K$ in $S$.

The M\"{o}bius function $\mu_{_\mathrm{M}}$ of this lattice is given by
$\mu_{_\mathrm{M}} (I,J) = (-1)^{\lvert J-I\rvert}$ when $I\subseteq J$ 
and by $\mu_{_\mathrm{M}} (I,J)=0$ otherwise. Further, the M\"{o}bius 
inversion formula acts as
\begin{equation}\label{eq:Mobius-1}
  g(K) \, = \sum_{I\subseteq K} f(I) \quad \Longleftrightarrow \quad
  f(K) \, = \sum_{I\subseteq K} (-1)^{\lvert K-I\rvert} g(I) \ts ,
\end{equation}
where $f$ and $g$ are arbitrary functions on $2^{S}$; see
\cite[Ch.~IV.2 and Ex.~4.15.IV]{Aigner} for more.

There is an interesting connection between the lattice $2^S$ of
subsets of a non-empty finite set $S$ and the partition lattice of $S$
as follows, where summation variables are marked with a dot below them
for clarity. For our later calculations with logarithms, we
choose a formulation that shows the interplay between multiplicative
and additive structures.

\begin{lemma}\label{lem:set-to-part}
  Let\/ $S$ be a non-empty finite set and let\/
  $f : 2^{S} \setminus \varnothing \longrightarrow \RR_{+}$ be a
  strictly positive function. Assume that\/ $f$ is extended to a
  function on the partitions of\/ $S$ by setting\/
  $f(\cA) = \prod_{A\in\cA} f(A)$ for any\/ $\cA\in \cP(S)$. Then, one
  has the identity
\[
   \sum_{\udo{\cA}\in \cP(S)} (-1)^{\lvert \cA \rvert -1}
   \bigl( \lvert \cA \rvert - 1 \bigr)  ! \, \log  f(\cA)
   \, = \,  \! \sum_{\varnothing \ne \udo{A} \subseteq S}
     \! (-1)^{\lvert S - A \rvert } \log f(A) \ts .
\]  
\end{lemma}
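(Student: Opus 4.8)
The plan is to exploit the fact that $f$ enters only through $\log f$, which turns the multiplicative extension $f(\cA) = \prod_{A \in \cA} f(A)$ into the additive relation $\log f(\cA) = \sum_{A \in \cA} \log f(A)$. Writing $g \defeq \log f$, both sides of the claimed identity become linear functionals in the values $\{ g(A) : \varnothing \ne A \subseteq S \}$. Since $\log$ maps $\RR_{+}$ bijectively onto $\RR$, these values are completely free, so the identity holds for all admissible $f$ if and only if the coefficient of each $g(A)$ agrees on the two sides.

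First I would rewrite the left-hand side by interchanging the two summations. After substituting $\log f(\cA) = \sum_{A \in \cA} g(A)$, it reads $\sum_{\varnothing \ne A \subseteq S} c(A) \, g(A)$, where
\[
  c(A) \,=\! \sum_{\substack{\cA \in \cP(S)\\ A \in \cA}}
    (-1)^{\lvert \cA \rvert - 1} \bigl( \lvert \cA \rvert - 1 \bigr)!
\]
collects all partitions of $S$ that have $A$ as one of their blocks. The right-hand side is manifestly $\sum_{\varnothing \ne A \subseteq S} (-1)^{\lvert S - A \rvert} g(A)$, so the task reduces to showing $c(A) = (-1)^{\lvert S - A \rvert}$ for every non-empty $A \subseteq S$.

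The key observation is a bijection: the partitions of $S$ containing the fixed block $A$ correspond one-to-one to the partitions of the complement $S - A$, obtained by deleting, respectively adjoining, the block $A$. Under this correspondence, a partition $\cA$ with block $A$ satisfies $\lvert \cA \rvert = \lvert \cB \rvert + 1$, where $\cB$ is the associated partition of $S - A$. Hence
\[
  c(A) \,=\! \sum_{\cB \in \cP(S - A)}
    (-1)^{\lvert \cB \rvert} \, \lvert \cB \rvert \ts ! \ts ,
\]
which for non-empty $S - A$ is exactly the sum evaluated in Fact~\ref{fact:part-sum}, giving $c(A) = (-1)^{\lvert S - A \rvert}$ as required. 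The single case outside the scope of that Fact is $A = S$, where $S - A = \varnothing$: here the only admissible partition is $\cA = \{ S \}$, so $c(S) = (-1)^{0} \cdot 0! = 1 = (-1)^{\lvert S - S \rvert}$, and the identity holds in this case as well.

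I do not expect a genuine obstacle; the only point demanding care is the reduction step, namely the justification that matching the coefficients of $g(A)$ suffices. This rests on the freedom of the values $g(A)$, which in turn relies on the strict positivity of $f$, so that $\log f$ can take arbitrary real values. Once this is in place, Fact~\ref{fact:part-sum} performs all the combinatorial work.
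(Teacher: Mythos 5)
Your proof is correct and follows essentially the same route as the paper: interchange the double sum, identify the coefficient of $\log f(A)$ via the bijection between partitions of $S$ containing the block $A$ and partitions of $S-A$, apply Fact~\ref{fact:part-sum}, and treat $A=S$ separately. One small remark: your closing worry about whether ``matching coefficients suffices'' is moot --- since you compute the coefficient of each $g(A)$ directly, the two sides are shown equal termwise for every admissible $f$, and no linear-independence or freeness argument is needed.
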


\begin{proof}
  Observing that $\log f (\cA) = \sum_{A\in\cA} \log f(A)$, the order
  of the double summation on the left-hand side can be changed, which
  turns it into
\[
   \sum_{\varnothing \ne \udo{A} \subseteq S}  \log f(A)
   \sum_{A \in \udo{\cA} \in \cP (S)} (-1)^{\lvert \cA \rvert - 1}
     \bigl( \lvert \cA \rvert - 1 \bigr) !
\]
Here, the second sum is $1$ when $A=S$, and otherwise turns into a sum
over all partitions of the form $\{ A, B_{1}, \ldots , B_{m} \}$,
where $\cB = \{ B_1, \ldots , B_m \}$ is a partition of
$S-A$. Adjusting the count of the number of parts, the second sum then
becomes
\[
   \sum_{\cB \in \cP (S-A)}  (-1)^{\lvert \cB \rvert} \lvert \cB \rvert
    \ts !    \, = \, (-1)^{\lvert S-A \rvert} 
\]  
   by Fact~\ref{fact:part-sum}. Putting the two cases
   together establishes the claimed identity.
\end{proof}

Some of our results would naturally be formulated in terms of the
(complex) \emph{Jordan normal form} (JNF) of a given matrix. Since we
will later see that we only need to deal with diagonalisable matrices,
we simplify the tools and methods to this setting. Such matrices have
a JNF with only trivial elementary Jordan blocks. We call a matrix $B$
\emph{simple} if its eigenvalues are distinct, in which case $B$ is
automatically diagonalisable. When we have multiple  (or
repeated) eigenvalues, we call $\sigma (B)$, the spectrum of $B$,
\emph{degenerate}.

Markov matrices have a special spectral structure as follows, which we
recall from \cite{BS3}. Throughout, we use $\cM_d$ to denote the
subset of Markov matrices in $\Mat (d,\RR)$, which is a closed convex
set, and $\AAA^{\! (0)}_{d}$ for the set of all matrices from
$\Mat (d, \RR)$ with zero row sums. They form a non-unital algebra,
because $\one$ is not an element of it, and no other two-sided unit
exists in it. All Markov generators are elements of
$\AAA^{\! (0)}_{d}\! $. Let us first recall \cite[Fact~2.2]{BS3}.

\begin{fact}\label{fact:one-is-special}
  For all\/ $M\in\cM_d$, one has\/ $1\in\sigma (M)$ together with
  equal algebraic and geometric multiplicity. In particular, there is
  no non-trivial Jordan block for\/ $\lambda = 1$.
  
  Further, the corresponding statement holds for generators, which is
  to say that any Markov generator has\/ $0$ as an eigenvalue, again
  with no non-trivial Jordan block for it.  \qed
\end{fact}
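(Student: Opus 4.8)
The plan is to establish the eigenvalue $\lambda=1$ for $M\in\cM_d$ directly, to confine the whole spectrum to the closed unit disc, and then to exclude nontrivial Jordan blocks by a growth argument on the powers $M^n$; the generator statement will follow by an affine change of variable.

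First I would note that every $M\in\cM_d$ satisfies $M u = u$ for the all-ones column vector $u=(1,\ldots,1)^{\trans}$, because each row of $M$ sums to $1$. Hence $1\in\sigma(M)$ with eigenvector $u$, so the geometric multiplicity of $\lambda=1$ is at least one. By the Gershgorin circle theorem (or simply because $\lVert M\rVert_{\infty}=1$ for a row-stochastic matrix), every $\lambda\in\sigma(M)$ obeys $\lvert\lambda\rvert\leqslant 1$, so that $\lambda=1$ lies on the boundary of the spectral disc.

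The crux is to rule out a nontrivial Jordan block for $\lambda=1$. Since a product of Markov matrices is again Markov, every power $M^n$ lies in $\cM_d$, whence $\lVert M^{n}\rVert_{\infty}=1$ for all $n\in\NN$ and the powers are uniformly bounded. Passing to the JNF $M=T\bigl(\bigoplus_{j}J_{j}\bigr)T^{-1}$, a block $J_{j}$ of size $s\geqslant 2$ for an eigenvalue $\lambda$ with $\lvert\lambda\rvert=1$ contributes entries of the form $\binom{n}{s-1}\lambda^{\ts n-s+1}$ to $J_{j}^{n}$, whose modulus grows like $n^{s-1}$ and is therefore unbounded. This contradicts the boundedness of $M^{n}=T\bigl(\bigoplus_{j}J_{j}^{n}\bigr)T^{-1}$. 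Consequently, every Jordan block attached to $\lambda=1$ (indeed to any eigenvalue of modulus one) is trivial, so the algebraic and geometric multiplicities of $\lambda=1$ agree. This quantitative link between block size and unbounded growth is the main obstacle; the remaining steps are routine.

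For the generator statement, let $Q$ be a Markov generator, so that its off-diagonal entries are nonnegative and its row sums vanish; then $Qu=0$ yields $0\in\sigma(Q)$. Fixing any $c\geqslant\max_{i}\lvert Q_{ii}\rvert$ and setting $P=\one+Q/c$ produces a matrix with nonnegative entries and unit row sums, that is, $P\in\cM_d$. Because $P-\one=Q/c$, one has $\ker(Q^{j})=\ker\bigl((P-\one)^{j}\bigr)$ for every $j\geqslant 1$, so the full chain of generalised eigenspaces of $Q$ at $0$ coincides with that of $P$ at $1$. The equality of algebraic and geometric multiplicity just established for $P$ at $\lambda=1$ therefore transfers verbatim to $Q$ at $0$, which completes the argument.
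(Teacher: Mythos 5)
Your argument is correct; note only that the paper itself gives no proof of this Fact, which it recalls verbatim from \cite{BS3}. Your route --- the fixed vector $u=(1,\ldots,1)^{\trans}$, the bound $\lVert M^{n}\rVert_{\infty}=1$ forcing every Jordan block on the unit circle to be trivial, and the affine shift $P=\one+Q/c$ (with $c>0$) to transfer the statement to generators --- is precisely the standard argument behind the cited result, so there is nothing to compare against and nothing to fix.
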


Given a real matrix $B$, we need to know when it possesses a real
matrix logarithm, that is, a real matrix $R$ such that $B = \ee^R$
holds, where the exponential of a matrix is defined by the convergent
series $\ee^R = \sum_{n=1}^{\infty} \frac{1}{n\ts !} R^n$. Further, we
ask when such a real logarithm is unique. The following important
characterisation follows from \cite[Thms.~1 and 2]{Culver}.

\begin{fact}\label{fact:Culver}
  A diagonalisable matrix\/ $B\in\Mat (d,\RR)$ has a real logarithm if
  and only if the following two conditions are satisfied.
\begin{enumerate}\itemsep=2pt
\item The matrix\/ $B$ is non-singular.
\item Each negative eigenvalue of\/ $B$ occurs with even multiplicity.
\end{enumerate}   
Further, when\/ $B$ has simple spectrum with positive eigenvalues
only, the real logarithm of\/ $B$ is unique.  \qed
\end{fact}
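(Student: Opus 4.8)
The plan is to prove the two directions of the equivalence separately and then treat uniqueness. For \emph{necessity}, suppose $B = \ee^{R}$ with $R\in\Mat(d,\RR)$. Since $\ee^{R}$ always has the inverse $\ee^{-R}$, the matrix $B$ is non-singular, which is condition~(1). For condition~(2), I would use that the eigenvalues of $B$ are exactly the numbers $\ee^{\mu}$ as $\mu$ ranges over $\sigma(R)$, with algebraic multiplicities respected (via the complex Jordan form of $R$). A negative real eigenvalue $\beta<0$ of $B$ can only arise from an eigenvalue $\mu$ of $R$ with $\ee^{\mu}=\beta$, which forces $\mu=\log\lvert\beta\rvert+\mathrm{i}\ts\pi(2k+1)$ for some $k\in\mathbb{Z}$; in particular $\mu\notin\RR$. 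As $R$ is real, its non-real spectrum is invariant under complex conjugation with matching multiplicities, and $\ee^{\bar\mu}=\beta$ as well. Since $\mu\ne\bar\mu$, these contributions pair up, so the total multiplicity of $\beta$ in $\sigma(B)$ is even.

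For \emph{sufficiency}, I would invoke the real canonical form of the diagonalisable matrix $B$: over $\RR$, such a $B$ is similar to a block-diagonal matrix whose blocks are $1\times 1$ blocks $[\lambda]$ for the real eigenvalues $\lambda$ and $2\times 2$ blocks $\left(\begin{smallmatrix} a & -b \\ b & a\end{smallmatrix}\right)$ for each conjugate pair $a\pm\mathrm{i}\ts b$ with $b>0$. A real logarithm can then be written down block by block: a positive real block $[\lambda]$ receives $[\log\lambda]$, while a complex-pair block receives $\left(\begin{smallmatrix} \log r & -\theta \\ \theta & \log r\end{smallmatrix}\right)$, where $a+\mathrm{i}\ts b = r\ts\ee^{\mathrm{i}\theta}$. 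The negative real eigenvalues are exactly where condition~(2) enters: grouping them into pairs of equal value $\beta<0$, each pair spans a real invariant plane on which $B$ acts as $\beta\ts\one_{2}$, and the explicit matrix $\log\lvert\beta\rvert\,\one_{2} + \pi\left(\begin{smallmatrix} 0 & -1 \\ 1 & 0\end{smallmatrix}\right)$ exponentiates to $\beta\ts\one_{2}$. Assembling these blocks and conjugating back by the real similarity transformation yields a real logarithm of $B$.

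For \emph{uniqueness} under the stated hypotheses, suppose $B$ has simple spectrum with only positive eigenvalues and $B=\ee^{R}$ with $R$ real. Since $B$ is a convergent power series in $R$, it commutes with $R$; conversely, as $B$ has $d$ distinct real eigenvalues, every matrix commuting with $B$ leaves each one-dimensional eigenspace invariant and is therefore diagonalised by the same real eigenbasis $P$ of $B$. Hence $R = P\ts\diag(r_{1},\dots,r_{d})\ts P^{-1}$ with real entries $r_{i}$ satisfying $\ee^{r_{i}}=\lambda_{i}$. Because each $\lambda_{i}>0$, the equation $\ee^{r_{i}}=\lambda_{i}$ has the unique real solution $r_{i}=\log\lambda_{i}$, so $R$ is determined uniquely.

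I expect the main obstacle to lie in the sufficiency direction, and specifically in the handling of negative eigenvalues: one must recognise that the logarithm cannot be taken diagonally over $\RR$ there, reduce to two-dimensional real invariant subspaces using the even-multiplicity hypothesis, and verify the explicit $2\times 2$ exponential. The passage to real canonical form and the conjugation back are standard, but they must be organised so that the per-block logarithms glue into a single real matrix; the conjugation argument for uniqueness, by contrast, is comparatively clean once commutation with $B$ is observed.
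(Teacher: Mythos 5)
Your proof is correct. Note that the paper itself does not prove this statement: it is presented as a Fact with a tombstone and attributed directly to Theorems~1 and~2 of Culver's paper, so there is no in-text argument to compare against. What you have written is essentially a correct, self-contained reconstruction of the standard (Culver-style) proof: the spectral mapping theorem plus conjugate pairing of the non-real eigenvalues of a real $R$ for necessity; the real canonical form, with $2\times 2$ rotation--scaling blocks for complex pairs and the pairing of equal negative eigenvalues into planes on which $\log\lvert\beta\rvert\ts\one_{2}+\pi\bigl(\begin{smallmatrix}0&-1\\ 1&0\end{smallmatrix}\bigr)$ serves as a logarithm, for sufficiency; and the commutant argument (simple spectrum forces any real logarithm to be diagonal in the same real eigenbasis, where positivity of the eigenvalues pins down each diagonal entry) for uniqueness. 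All three steps are sound, including the slightly delicate point that several distinct conjugate pairs $\mu,\bar\mu$ may map to the same negative eigenvalue, which your pairing argument handles correctly.
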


Finally, we need the \emph{spectral mapping theorem} (SMT). When $B$
is a matrix with spectrum $\sigma (B)$, and $h (x)$ is any polynomial
in one variable, also $h (B)$ is well defined, and the general SMT
\cite[Thm.~10.33]{Rudin} applied to matrices states that
\begin{equation}\label{eq:SMT}
  \sigma \bigl( h (B) \bigr) \, = \, h \bigl( \sigma (B) \bigr)
  \, \defeq \, \{ \ts h (\lambda) : \lambda \in \sigma (B) \} \ts ,
\end{equation} 
which holds for the spectrum including multiplicities. This extends
easily to power series of matrices when all eigenvalues lie inside the
disk of convergence. This is all we shall need below; see
\cite{Higham, HJ} for further background.

\section{The multiple coupon collection process}\label{sec:mccp}

Here, with $S = \{ 1, 2, \ldots , N\}$ for the $N$ objects, we
use the corresponding power set lattice, $2^{S}$, as introduced
above. Now, let $Z\subseteq S$ be the (random) set of objects sampled
in one step, and let its distribution be given by
$p = (p^{\pa}_{K})^{\pa}_{K \subseteq S}$, that is,
$\PP (Z=K) = p^{\pa}_{K} \geqslant 0$ for $K\subseteq S$, with
$\sum_{K\subseteq S} p^{\pa}_{K} = 1$.  As mentioned in the introduction,
the MCCP then is the discrete-time Markov chain
$(X^{\pa}_n)^{\pa}_{n\in\NN_0}$ on the set of subsets of $S$, where $X^{\pa}_n$
is the set of distinct objects sampled until step $n$. Note that no
record is kept on the number of times an object occurs in the
process. Below, we assume $X^{\pa}_{0} = \varnothing$ unless stated
otherwise.  Clearly, $X^{\pa}_{n+1} = X^{\pa}_{n} \cup I$ with probability
$p^{\pa}_{I}$ for $I\subseteq S$, independently for every $n$ and
independently of the current state $X^{\pa}_n$.  This gives the
transition probabilities
$M^{\pa}_{IJ} = \PP (X^{\pa}_{n+1} = J \, | \, X^{\pa}_{n} = I )$ as
\begin{equation}\label{eq:def-prop-PM}
     M^{\pa}_{IJ} \, = \sum_{ I \cup \udo{K} = J } p^{\pa}_{K}
\end{equation}
for all $I, J \subseteq S$, where $K$ runs through all subsets of $J$
subject to the given condition. The relation \eqref{eq:def-prop-PM}
includes the relation $M^{\pa}_{IJ} = 0$ for $I\not\subseteq J$
because the empty sum is zero by convention. In particular, $M$ is
triangular, with $M^{\pa}_{SS} = 1$.  Let us formalise this as follows.

\begin{definition}\label{def:PM}
  Let\/ $S=\{ 1, 2, \ldots , N \}$, for an arbitrary but fixed\/
  $N\in\NN$.  Then, a matrix\/ $M \in \Mat (2^N, \RR)$ is said to
  possess \emph{Property} CM (coupon Markov), or to be a 
  CM matrix for short, if there
  is a probability vector\/ $p = (p^{\pa}_{K})^{\pa}_{K\subseteq S}$ such
  that Eq.~\eqref{eq:def-prop-PM} holds for all\/ $I,J \subseteq S$.
\end{definition}

Despite the origin of \eqref{eq:def-prop-PM} from the MCCP,
Definition~\ref{def:PM} does not require $M$ to be a Markov matrix in
the first place. The Markov property should then be viewed as a
consequence of the parametrisation. Indeed, we have the following
simple consistency result.

\begin{fact}\label{fact:easy}
   Any CM matrix is a Markov matrix.
\end{fact}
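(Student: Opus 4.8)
The plan is to verify the two defining conditions of a Markov matrix directly from the parametrisation in Eq.~\eqref{eq:def-prop-PM}: namely that all entries are non-negative, and that every row sum equals $1$. The non-negativity is immediate, since each $M^{\pa}_{IJ}$ is, by definition, a sum of probabilities $p^{\pa}_{K} \geqslant 0$, and the empty sum (which occurs precisely when $I \not\subseteq J$) is zero. So the only real content lies in the row-sum identity.

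First I would fix $I \subseteq S$ and compute the $I$-th row sum
\[
  \sum_{J \subseteq S} M^{\pa}_{IJ}
  \, = \sum_{J \subseteq S} \; \sum_{I \cup \udo{K} = J} p^{\pa}_{K} \ts .
\]
The idea is to reorganise this double sum by summing over $K$ first. For a fixed $K \subseteq S$, the condition $I \cup K = J$ determines $J$ uniquely, so $p^{\pa}_{K}$ contributes to exactly one value of $J$ (namely $J = I \cup K$). Hence, as $J$ ranges over all subsets of $S$ and $K$ ranges over all subsets satisfying the constraint, every $K \subseteq S$ is counted exactly once. This collapses the double sum to $\sum_{K \subseteq S} p^{\pa}_{K}$, which equals $1$ because $p$ is a probability vector.

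The only point requiring a moment's care is confirming that each $K \subseteq S$ appears for precisely one $J$ in the reorganised sum, with no omissions or repetitions. This follows because the map $K \mapsto I \cup K$ sends every subset $K$ to a well-defined subset $J = I \cup K$ with $I \subseteq J \subseteq S$, so each $K$ lands in exactly one row-entry $M^{\pa}_{IJ}$. There is no double counting, since $K$ determines $J$, and no $K$ is lost, since $I \cup K$ is always a valid subset of $S$. I do not expect a genuine obstacle here; the statement is essentially a bookkeeping consequence of the fact that, starting from state $I$, the chain $X$ must move to \emph{some} subset at each step, so the transition probabilities out of $I$ necessarily sum to $1$. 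The triangularity noted after Eq.~\eqref{eq:def-prop-PM} and the entry $M^{\pa}_{SS} = 1$ are consistent with this and could be mentioned as a sanity check, but they are not needed for the proof.
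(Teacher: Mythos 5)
Your proof is correct and follows essentially the same route as the paper's: non-negativity is immediate from $p^{\pa}_{K}\geqslant 0$, and the row sum collapses to $\sum_{K\subseteq S} p^{\pa}_{K}=1$ by observing that each $K$ contributes to exactly one entry of the row. The paper phrases this re-indexing slightly differently (writing the $K$ with $I\cup K=J$ as $(J-I)\,\dot\cup\, L$ with $L\subseteq I$ and summing over $J\supseteq I$ and $L$), but the underlying bijective count of pairs $(J,K)$ is the same.
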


\begin{proof}
  If $p$ is a probability vector, we have $p^{\pa}_{K}\geqslant 0$ for
  all $K\subseteq S$, and then $M^{\pa}_{IJ} \geqslant 0$ for all
  $I,J\subseteq S$ by \eqref{eq:def-prop-PM}. Since we also have
  $\sum_{K\subseteq S} p^{\pa}_{K} = 1$, we can calculate the row sums of
  $M$ as
 \[
     \sum_{\udo{J}\subseteq S} M^{\pa}_{IJ} \, = \sum_{\udo{J}\subseteq S}\,
     \sum_{I \cup \udo{K} = J} p^{\pa}_{K} \, = \sum_{I\subseteq\udo{J}\subseteq S}
     \,  \sum_{\udo{L}\subseteq I} p^{\pa}_{(J-I)\dot{\cup} L} 
      \, = \sum_{\udo{K}\subseteq S} p^{\pa}_{K}   \, = \, 1  \ts ,
 \]
   which is true for any $I\subseteq S$.
\end{proof}

The class of CM matrices precisely consists of the MCCP Markov
matrices.  Clearly, Property CM is preserved under convex
combinations, as we shall exploit below and in
Section~\ref{sec:inter}. Due to the bijective parametrisation by
probability vectors, the set of CM matrices, for any fixed set
$S$, has local dimension $2^{\lvert S \rvert} \nts - 1$. All such
matrices have a rather specific triangular form, and can be
interpreted as elements of the \emph{incidence algebra} \cite{Aigner,
  Spiegel} of $2^{S}$, which manifests itself in strong properties due
to the underlying lattice structure.

\begin{example}\label{ex:reco}
  Let us look at the elementary special case of each object $i \in S$
  appearing independently with probability $\pi_{i}$, where we assume
  $0 < \pi_i < 1$ to avoid degeneracies. Let us mention in passing
  that this case also appears naturally in the context of genetics,
  where the objects are the links between the sites in a sequence,
  each of which is cut by genetic recombination, independently in
  every generation and independently of each other \cite{EMB}.
 
  For $K\subseteq S$, this gives
\[
    p^{\pa}_{K} \: = \, \Bigl( \, \prod_{i\in K} \pi_i \Bigr) 
    \prod_{j \in \, \overline{\! K}} (1-\pi_j)
\] 
 and thus the discrete-time Markov matrix $M$ with entries
\begin{equation}\label{eq:reco}
  M^{\pa}_{IJ} \, = \, \PP ( X^{\pa}_{1} = J \, | \, X^{\pa}_{0} = I )
  \: = \, \Bigl( \, \prod_{i \in J-I}  \! \pi_{i} \Bigr)
  \prod_{j\in S-J} (1 - \pi_{j} ) \ts ,
\end{equation}
where $(X_n )^{\pa}_{n\in \NN_{0}}$ is the corresponding discrete-time
Markov chain. The derivation of \eqref{eq:reco} employed the identity
$1 = \prod_{i\in I} \bigl( \pi_{i} + (1 - \pi_{i}) \bigr) =
\sum_{K\subseteq I} \bigl(\prod_{i\in K} \pi_{i} \bigr) \prod_{j\in
  I-K} (1-\pi_{j})$. \smallskip
  
The corresponding continuous-time process, by an educated guess or
insight from the existing literature, consists of independent Poisson
processes, with rate $r_{i} = - \log (1 - \pi_i )$ at site $i\in
S$. This gives $\ee^{-r_i} = 1-\pi_i$ and describes the appearance of
links (or coupons) at time $1$ via the probabilities
\[
  \PP (Y^{\pa}_{1} = J \, | \, Y^{\pa}_{0} = I ) \: = \, \Bigl( \,
  \prod_{i\in J-I} (1-\ee^{-r_{i}}) \Bigr) \prod_{j \in S - J}
  \ee^{-r_{j}} ,
\]   
where $\bigl( Y^{\pa}_{t} \bigr)_{t \geqslant 0}$ now is the Markov chain
in continuous time. This matches the discrete-time formula from
\eqref{eq:reco}.  Consequently, in this non-degenerate case of
independent sampling, we always have embeddability. \exend
\end{example}

While this example looks simple, the connection between discrete and
continuous time processes is considerably more complex in the presence
of dependencies, and requires a more detailed investigation.

\section{Markov matrices and generators
  for MCCPs}\label{sec:matrices}

Let us analyse the structure of CM matrices in some detail, and then
derive the corresponding notion for Markov generators (and
beyond). This will establish a rather strong algebraic structure that
is extremely helpful for dealing with the embedding problem.

\begin{lemma}\label{lem:prop-PM}
  If\/ $M$ is a\/ \textnormal{CM} matrix, with parametrising
  probability vector\/ $p$, any power\/ $M^n$ with\/ $n\in\NN$ is a\/
  \textnormal{CM} matrix as well, then with parameter vector\/
  $p^{(n)} = p M^{n-1}$, whose entries are
\[
  p^{(n)}_{\ts\ts K} \, = \, \bigl( p M^{n-1}\bigr)_{K} \, = \!
  \sum_{\substack{U^{\pa}_{1}, \ldots , U^{\vphantom{I}}_{n} \\
      U^{\pa}_{1} \nts \cup \dots \cup \ts U^{\pa}_{n} = K}}
      \prod_{i=1}^{n} p^{\pa}_{\ts U_{i}} \ts .
\]
\end{lemma}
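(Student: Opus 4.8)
The plan is to reduce everything to a single multiplicative identity for products of CM matrices, and then iterate. For two probability vectors $p$ and $q$ on $2^{S}$, I would introduce the \emph{union-convolution}
\[
  (p * q)_{K} \, = \sum_{\udo{U} \cup \udo{V} = K} p_{U} \ts q_{V} \ts ,
\]
which is again a probability vector (non-negativity is clear, and the total mass factorises as $\sum_{U} p_{U} \sum_{V} q_{V} = 1$), and which is the distribution of $Z \cup W$ for independent $Z \sim p$ and $W \sim q$. Writing $M_{p}$ for the CM matrix attached to $p$ through Eq.~\eqref{eq:def-prop-PM}, the whole lemma rests on the rule $M_{p} \ts M_{q} = M_{p * q}$.

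To establish this, I would compute $(M_{p} M_{q})_{IJ} = \sum_{\udo{L}} (M_{p})_{IL} (M_{q})_{LJ}$ directly from the defining formula. Substituting $(M_{p})_{IL} = \sum_{I \cup \udo{U} = L} p_{U}$ and $(M_{q})_{LJ} = \sum_{L \cup \udo{V} = J} q_{V}$, the intermediate index $L$ is forced to equal $I \cup U$, and the two nested constraints then collapse to the single condition $I \cup U \cup V = J$, giving $(M_{p} M_{q})_{IJ} = \sum_{I \cup \udo{U} \cup \udo{V} = J} p_{U} \ts q_{V}$. Regrouping the pairs $(U,V)$ according to their union $K = U \cup V$ identifies the right-hand side as $\sum_{I \cup \udo{K} = J} (p * q)_{K}$, which is the $(I,J)$-entry of $M_{p*q}$. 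This is the one genuine computation in the proof; the only thing to watch is the bookkeeping of the nested union conditions, which is why it pays to pass to the flat constraint $I \cup U \cup V = J$ before regrouping.

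With the multiplicative rule in hand, the statement follows by induction on $n$, the case $n = 1$ being immediate with $p^{(1)} = p M^{0} = p$. For the step, $M^{n} = M \cdot M^{n-1} = M_{p} \cdot M_{p^{(n-1)}} = M_{p * p^{(n-1)}}$, so $M^{n}$ is again a CM matrix; since the parametrisation $p \mapsto M_{p}$ is bijective, its parameter is the $n$-fold convolution $p^{*n} = p * \cdots * p$, and by associativity of $*$ (read off its definition) this iterated convolution has precisely the claimed entries $\sum_{U_{1} \cup \dots \cup U_{n} = K} \prod_{i=1}^{n} p_{U_{i}}$. It then remains to identify $p^{*n}$ with $p M^{n-1}$. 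For this I would use that the $\varnothing$-indexed row of any CM matrix $M_{q}$ equals $q$ itself, since $(M_{q})_{\varnothing J} = \sum_{\varnothing \cup \udo{K} = J} q_{K} = q_{J}$. Reading off the $\varnothing$-row of $M^{n} = M \cdot M^{n-1}$ therefore yields $p^{*n} = p \ts M^{n-1}$, the asserted formula; the probability-vector property of $p^{*n}$ moreover re-confirms, via Fact~\ref{fact:easy}, that $M^{n}$ is a bona fide Markov matrix. I expect this last identification to be the only delicate point, as one must check that $p M^{n-1}$ genuinely coincides with the convolution rather than merely reproducing the same matrix.
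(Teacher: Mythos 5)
Your proposal is correct. The heart of your argument --- collapsing the nested union constraints $I\cup U=L$, $L\cup V=J$ into the single flat constraint $I\cup U\cup V=J$ and then regrouping by $K=U\cup V$ --- is exactly the computation the paper performs, but you organise it differently. The paper runs two separate inductions on $n$ directly at the level of matrix entries: one to establish the recursion $p^{(n+1)}=p^{(n)}M$ in the stated product form, and a second to verify that $p^{(n)}$ really is the CM parameter vector of $M^n$. You instead isolate a single binary identity $M_p\ts M_q=M_{p*q}$ for the union-convolution $(p*q)_K=\sum_{U\cup V=K}p_U\ts q_V$ and then obtain everything from associativity of $*$ and the observation that the $\varnothing$-row of $M_q$ is $q$ itself (which also gives the injectivity of $q\mapsto M_q$ that you need to identify the parameter vector, and the identification $p^{*n}=p\ts M^{n-1}$). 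This buys a cleaner statement --- the CM matrices form a monoid under multiplication, matched by a convolution monoid on parameter vectors --- and in fact anticipates precisely the $\star$-product on the moduli space that the paper only introduces later, in Eq.~\eqref{eq:def-mult} of Section~\ref{sec:inter} and Remark~\ref{rem:commute}; the paper's version keeps the proof of this lemma self-contained and entry-wise at the cost of doing the same bookkeeping twice. Your probabilistic reading of $p*q$ as the law of $Z\cup W$ for independent $Z\sim p$, $W\sim q$ is also the natural interpretation, consistent with the dynamics $X_{n+1}=X_n\cup I$ that define the process.
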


\begin{proof}
  The probability vector $p$ coincides with the row
  $(M^{\pa}_{\varnothing K})^{\pa}_{K\subseteq S}$ of the Markov matrix
  $M$. The corresponding row of $M^n$ clearly is
  $p^{(n)} = p M^{n-1}$, and we can inductively calculate
\begin{equation}\label{eq:step}
\begin{split}
   p^{(n+1)}_{\, K} \, & = \, \bigl( p^{(n)} M \bigr)_{K} \, = 
   \sum_{I \subseteq K} p^{(n)}_{\, I} M^{\phantom{(n)}}_{IK}
   \, = \sum_{I\subseteq K}
   \sum_{\substack{U^{\pa}_{1}, \ldots , U^{\pa}_{n} \\
      U^{\pa}_{1} \nts \cup \dots \cup \ts U^{\vphantom{I}}_{n} = I}}
     \biggl( \, \prod_{i=1}^{n} p^{\pa}_{\ts U_{i}} \biggr)
     \sum_{I \cup \udo{J} = K} p^{\pa}_{J} \\
   & = \sum_{\substack{ U^{\pa}_{1}, \ldots , U^{\pa}_{n}, J \\
       U^{\pa}_{1} \nts \cup \dots \cup \ts U^{\vphantom{I}}_{n} \cup J = K}}
       \!\! p^{\pa}_{J} \; \prod_{i=1}^{n} p^{\pa}_{\ts U_i} \, = 
       \sum_{\substack{ U^{\pa}_{1}, \ldots , U^{\pa}_{n+1}\\
       U^{\pa}_{1} \nts \cup \dots \cup \ts U^{\vphantom{I}}_{n+1} = K}}
         \prod_{i=1}^{n+1} p^{\pa}_{\ts U_i} \ts ,
\end{split}       
\end{equation}  
   which establishes the claimed formula for the $p^{(n)}$.
   
   It remains to prove that $p^{(n)}$ is the probability vector needed
   for the validity of Property CM for $M^n$. This is clear for $n=1$
   by assumption, so we can once again proceed inductively from $n$ to
   $n + 1$, this time via
\[
\begin{split}
   M^{n+1}_{\, IJ} \,  & = \sum_{I\subseteq \udo{K}\subseteq J}
   M^{\ts n}_{I K} \, M^{\phantom{(n)}}_{K J} \, = 
   \sum_{I \subseteq \udo{K} \subseteq J} \,
   \sum_{I \cup \udo{H} = K} p^{(n)}_{\ts H} \!
       \sum_{K\cup \ts \udo{U}=J}
   p^{\phantom{(n)}}_{\ts U} \\[2mm]
   & = \sum_{\substack{H, U \subseteq J \\ 
     I \cup H \cup \ts U^{\vphantom{I}} = J}} \!\!
     p^{(n)}_{\ts H}  p^{\phantom{(n)}}_{\ts U}  =
     \sum_{I \cup \udo{K} = J}  p^{(n+1)}_{\, K} ,
\end{split}
\]
where the last step used the relations for the $p^{(n+1)}$ derived 
in \eqref{eq:step}.
\end{proof}

Next, let us look at the property analogous to CM at the level of rate
matrices. To do so, we consider $A=M\nts - \one$, which is a rate
matrix in the incidence algebra (its off-diagonal entries are
non-negative and its row sums are zero, as are all elements
$A^{\pa}_{IJ}$ with $I\not\subseteq J$). What is more, it inherits some
of Property CM as follows. For $I\ne J$, we have
$A^{\pa}_{IJ} = M^{\pa}_{IJ}$ and Eq.~\eqref{eq:def-prop-PM} applies, while
the diagonal elements are fixed by the zero row sum condition, so
$A^{\pa}_{I\ts I} = - \sum_{I\subset \udo{J}} A^{\pa}_{IJ}$ holds for all
$I\subseteq L$.

This suggests that, given $S$, we can make a (preliminary) definition
of what we will call \emph{Property} CG (coupon generator), for
all elements $B$ of the incidence algebra with zero row sums, by
saying that a real vector
$r = (r^{\pa}_{K})^{\pa}_{\varnothing\ne K\subseteq S}$ of dimension
$2^{\lvert S \rvert} - 1$ exists such that
\begin{equation}\label{eq:def-prop-PG}
\begin{split}
    B^{\pa}_{I J} \, & = \sum_{I\cup \udo{K} = J} r^{\vphantom{\chi}}_{K}
    \qquad \text{for $I\ne J$ together with} \\
    B^{\pa}_{I\ts I} \, & = \, - \sum_{I \subset \udo{J}} B^{\pa}_{I J}
\end{split}
\end{equation}
holds for all $I, J \subseteq S$. Then, $A = M\nts - \one$ satisfies
Property CG with $r^{\pa}_{K} = p^{\pa}_{K}$ for
$\varnothing\ne K \subseteq S$, and all row sums are zero.  Note that
Property CG is a \emph{linear} property in the sense that any linear
combination of CG matrices is another matrix with this property. In
other words, the CG matrices form a vector space over $\RR$. Note that
this property is \emph{not} restricted to rate matrices. A little
later, we shall see how one can consistently add a value for
$r^{\pa}_{\varnothing}$, then giving access to more powerful algebraic
methods, but we do not need this for our present discussion and thus
first continue with the conditions from \eqref{eq:def-prop-PG}.

\begin{lemma}\label{lem:A-and-powers}
  Let\/ $M$ be a\/ \textnormal{CM} matrix and set\/ $A=M\nts -
  \one$. Then, for every\/ $k\in\NN$, the matrix\/ $A^k$ has zero row
  sums and possesses Property\/ \textnormal{CG} according to
  \eqref{eq:def-prop-PG}, with a unique real parameter vector\/
  $r^{(k)}$ of dimension\/ $2^{\lvert S \rvert} - 1$.
\end{lemma}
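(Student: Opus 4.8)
The plan is to induct on $k$, treating the zero-row-sum property as a quick preliminary and reserving the real effort for Property~CG. For the row sums, I would observe that $M$ is a Markov matrix by Fact~\ref{fact:easy}, so $M\mathbf{1}=\mathbf{1}$ for the all-ones column vector $\mathbf{1}$, whence $A\mathbf{1}=(M-\one)\mathbf{1}=0$ and therefore $A^{k}\mathbf{1}=A^{k-1}(A\mathbf{1})=0$ for every $k$. Since the incidence algebra of $2^{S}$ is closed under products, each $A^{k}$ also remains triangular of the required form. The delicate point is Property~CG: in contrast to the computation for the powers $M^{n}$ in Lemma~\ref{lem:prop-PM}, the matrices $A$ and $A^{k}$ have nonzero diagonal entries that are coupled to the off-diagonal ones through the zero-row-sum constraint, so a direct expansion of $(A^{k+1})_{IJ}=\sum_{I\subseteq L\subseteq J}(A^{k})_{IL}A_{LJ}$ mixes diagonal and off-diagonal contributions and does not visibly collapse into the convolution form $\sum_{I\cup K=J}r^{(k+1)}_{K}$. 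This diagonal bookkeeping is the main obstacle.

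To dissolve it, I would adopt the reformulation that the paper flags for later use, introducing a component $r_{\varnothing}$. The key observation is that a matrix $B$ in the incidence algebra has Property~CG precisely when $B_{IJ}=\sum_{I\cup K=J}r_{K}$ holds for \emph{all} $I,J$ (now including $I=J$), for some full vector $(r_{K})_{K\subseteq S}$ subject to $\sum_{K\subseteq S}r_{K}=0$. Indeed, summing over $J$ gives $\sum_{J}B_{IJ}=\sum_{K}r_{K}$, since for each $K$ there is exactly one $J$ with $I\cup K=J$; hence the zero-row-sum condition is equivalent to $r_{\varnothing}=-\sum_{\varnothing\ne K}r_{K}$, which reproduces the diagonal prescription in \eqref{eq:def-prop-PG} and leaves exactly the $2^{\lvert S\rvert}-1$ free parameters $(r_{K})_{\varnothing\ne K}$. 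In this language the assignment $r\mapsto B$ is nothing but the regular representation $\Phi$ of the commutative monoid algebra $\RR[2^{S},\cup]$, because the union rule $\delta_{K}\delta_{L}=\delta_{K\cup L}$ reproduces exactly the convolution defining the entries of $B$. This $\Phi$ is an injective algebra homomorphism, faithful because the monoid identity $\delta_{\varnothing}$ maps to $\one$ (so $a=\Phi(a)\delta_{\varnothing}$), and Property~CG is precisely the assertion that $B$ lies in $\Phi(\mathcal{I})$, where $\mathcal{I}=\{a:\sum_{K}r_{K}=0\}$ is the augmentation ideal.

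With this structure in place the lemma follows without further calculation. Writing $M=\Phi\bigl(\sum_{K}p_{K}\delta_{K}\bigr)$ and $\one=\Phi(\delta_{\varnothing})$, I get $A=\Phi(a_{A})$ with $a_{A}=\sum_{K}p_{K}\delta_{K}-\delta_{\varnothing}$, whose augmentation is $1-1=0$, so $a_{A}\in\mathcal{I}$. Because $\Phi$ is a homomorphism, $A^{k}=\Phi(a_{A}^{k})$, and because $\mathcal{I}$ is an ideal, $a_{A}^{k}\in\mathcal{I}$; hence $A^{k}$ has Property~CG, with $r^{(k)}$ read off as the restriction of $a_{A}^{k}$ to the nonempty subsets. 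Uniqueness of $r^{(k)}$ is then immediate from the injectivity of $\Phi$: the element $a_{A}^{k}$, and in particular its nonempty-$K$ components, is determined by $A^{k}$. The only thing requiring care is to justify that enlarging the parameter by $r_{\varnothing}$ is consistent, which is exactly the reformulation above; a reader wishing to stay within \eqref{eq:def-prop-PG} can instead carry out the direct induction, paying the price of the diagonal bookkeeping that the augmentation viewpoint organizes automatically.
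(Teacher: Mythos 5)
Your argument is correct, but it reaches the conclusion by a genuinely different route than the paper. The paper's proof is short and computational: it writes $A^k=(M-\one)^k=\sum_{m=1}^{k}\binom{k}{m}(-1)^{k-m}(M^m-\one)$ via the binomial formula (using that the $m=0$ term cancels because $\sum_{m}\binom{k}{m}(-1)^{k-m}=0$), notes that each $M^m-\one$ has Property CG by Lemma~\ref{lem:prop-PM}, and invokes the linearity of Property CG; uniqueness comes from $r^{(k)}_K=(A^k)_{\varnothing K}$, exactly as you also observe. You instead bypass Lemma~\ref{lem:prop-PM} entirely by identifying the CM/CG matrices with the image of the regular representation $\Phi$ of the monoid algebra $\RR[2^S,\cup]$, so that Property CG (in the extended formulation with $r^{\pa}_{\varnothing}$ and $\sum_K r^{\pa}_K=0$) becomes membership in $\Phi(\mathcal I)$ for the augmentation ideal $\mathcal I$, and closure under powers is automatic. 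This is sound: your consistency check that the full-vector formulation reproduces \eqref{eq:def-prop-PG} is precisely the verification the paper performs after Definition~\ref{def:prop-PG}, the multiplicativity of the augmentation is the paper's Lemma~\ref{lem:sum-rule}, and injectivity of $\Phi$ follows from reading off the $\varnothing$-row. In effect you have anticipated the moduli-space/$\star$-product machinery of Section~\ref{sec:inter} and used it to prove the lemma; what you gain is a conceptual explanation (and uniqueness for free) that makes the later developments transparent, at the cost of front-loading structure the paper deliberately postpones with its remark that one can ``consistently add a value for $r^{\pa}_{\varnothing}$'' later. The paper's binomial trick is the more economical path given that Lemma~\ref{lem:prop-PM} is already available; your diagnosis that a naive entrywise induction on $(A^{k+1})_{IJ}$ gets tangled in the diagonal bookkeeping is accurate and is exactly what both approaches are designed to avoid.
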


\begin{proof}
  It is clear that $A$ has zero row sums, and writing $A^k = A^{k-1} A$
  for $k>1$ can be used to verify this property for all $A^k$.

  Property CG was derived above for the matrix $A$, when we motivated
  the notion. It is also satisfied by every matrix of the form
  $M^k \nts - \one$, by Lemma~\ref{lem:prop-PM}, with a parameter
  vector that derives from $p^{(k)}$ in the obvious way. From here, we
  get Property CG for $A^k$ via
\[
   A^k \, = \sum_{m=1}^{k} \binom{k}{m} (-1)^{k-m}
        \bigl( M^m \nts - \one \bigr) ,
\]
which easily follows from a double application of the binomial
formula, and the linearity of Property CG mentioned earlier. The
uniqueness of $r^{(k)}$ follows from the relation
$r^{(k)}_{\ts\ts K} = (A^{k})_{\varnothing K}$ for
$K\nts\ne \varnothing$.
\end{proof}

At this point, we can state the situation for CM matrices as follows.

\begin{prop}\label{prop:real-log-PG}
  If\/ $M$ is a non-singular\/ \textnormal{CM} matrix, it possesses a
  real logarithm, $R$, which is an element of the incidence algebra
  with zero row sums and real spectrum.
  
  Further, $R$ possesses Property\/ \textnormal{CG} as in
  \eqref{eq:def-prop-PG} with a unique, real parameter vector\/ $r$,
  and\/ $R$ is a Markov generator if and only if\/ $r$ has
  non-negative entries only.
\end{prop}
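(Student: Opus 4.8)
The plan rests on first establishing that a non-singular CM matrix $M$ is diagonalisable with positive spectrum, which unlocks both Fact~\ref{fact:Culver} and a polynomial functional calculus. Since $M$ lies in the incidence algebra, it is triangular with respect to $\subseteq$, so its eigenvalues are the diagonal entries $M^{\pa}_{I I} = \sum_{K \subseteq I} p^{\pa}_{K}$; non-singularity forces each of these to be strictly positive, whence $\sigma (M) \subseteq (0,1]$ is real and positive. For diagonalisability I would exploit the algebra structure visible in Lemma~\ref{lem:prop-PM}: the assignment $p \mapsto M$ is an injective homomorphism carrying the union-convolution $(p * p')^{\pa}_{K} = \sum_{U \cup V = K} p^{\pa}_{U}\ts p'_{V}$ to the matrix product, so the CM matrices form a commutative algebra. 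The zeta transform $p \mapsto \bigl( \sum_{K \subseteq I} p^{\pa}_{K} \bigr)^{\pa}_{I}$, invertible by Möbius inversion \eqref{eq:Mobius-1}, turns union-convolution into pointwise multiplication; hence this algebra is semisimple and isomorphic to $\RR^{2^N}$, and its elements are simultaneously diagonalisable, with $M$ carrying the eigenvalues listed above.

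With $M$ diagonalisable and $\sigma (M) \subseteq (0,\infty)$, Fact~\ref{fact:Culver} supplies a real logarithm, and I would single out the \emph{primary} one, $R = h(M)$, where $h$ is a polynomial interpolating $x \mapsto \log x$ on the finitely many distinct eigenvalues of $M$. Being a polynomial in $M$, this $R$ automatically lies in the incidence algebra, and its spectrum is real since the SMT \eqref{eq:SMT} yields $\sigma (R) = \{ \log \lambda : \lambda \in \sigma (M) \} \subseteq \RR$. For the zero-row-sum property, note that $M$ is Markov by Fact~\ref{fact:easy}, so the all-ones vector $\bs{1}$ satisfies $M \bs{1} = \bs{1}$; then $R \bs{1} = h(M)\ts \bs{1} = h(1)\ts \bs{1} = 0$ because $h(1) = \log 1 = 0$.

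Next, I would transfer Property~CG from the powers of $A = M - \one$ to $R$. Writing $R = h(M) = \widetilde{h}(A)$ with $\widetilde{h}(x) = h(1+x)$, the identity $R \bs{1} = 0$ together with $A \bs{1} = 0$ forces $\widetilde{h}(0) = 0$, so $R$ is a linear combination of the powers $A^{k}$ with $k \geqslant 1$. Each $A^{k}$ has Property~CG by Lemma~\ref{lem:A-and-powers}, and since Property~CG is preserved under linear combinations, $R$ inherits it, with parameter vector $r = \sum_{k} c_{k}\ts r^{(k)}$. Uniqueness of $r$ is immediate from \eqref{eq:def-prop-PG}: evaluating at $I = \varnothing$ gives $R^{\pa}_{\varnothing K} = r^{\pa}_{K}$ for every $\varnothing \ne K \subseteq S$, so $r$ is read off from the top row of $R$.

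Finally, the generator criterion follows from the fact that a matrix with zero row sums is a Markov generator precisely when its off-diagonal entries are non-negative. If all $r^{\pa}_{K} \geqslant 0$, then every off-diagonal entry $R^{\pa}_{IJ} = \sum_{I \cup K = J} r^{\pa}_{K}$ is a sum of non-negative terms (an empty, hence zero, sum when $I \not\subseteq J$), so $R$ is a generator. Conversely, the entries $R^{\pa}_{\varnothing K} = r^{\pa}_{K}$ with $K \ne \varnothing$ are themselves off-diagonal, so non-negativity of the off-diagonal part of a generator forces $r^{\pa}_{K} \geqslant 0$. The one genuinely substantive step is the diagonalisability of $M$; once that is secured, Fact~\ref{fact:Culver}, the representation $R = h(M)$, and Lemma~\ref{lem:A-and-powers} render the remaining assertions routine.
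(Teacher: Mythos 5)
Your proposal is correct, and its second half --- writing $R$ as a polynomial in $A=M-\one$ with vanishing constant term, importing Property CG from the powers $A^k$ via Lemma~\ref{lem:A-and-powers} and linearity, reading $r$ off the row $R^{\pa}_{\varnothing K}$ for uniqueness, and characterising the generator property through the signs of the $r^{\pa}_K$ --- is exactly the paper's argument. The difference lies in how you obtain $R$ in the first place. The paper simply takes the principal logarithm $R=\log(\one+A)$ as a convergent power series (the spectral radius of $A$ is below $1$ because $\sigma(M)\subset(0,1]$) and then invokes Cayley--Hamilton to reduce it to a finite linear combination of positive powers of $A$; no diagonalisability is needed at this stage, and indeed the paper only proves diagonalisability afterwards, in Proposition~\ref{prop:diag}. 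You instead first establish diagonalisability so as to apply Fact~\ref{fact:Culver} and define $R=h(M)$ by Lagrange interpolation of $\log$ on the spectrum. Your route for diagonalisability --- the parameter algebra under union-convolution is carried to pointwise multiplication by the zeta transform, hence is semisimple, hence contains no nonzero nilpotents, hence every element of the matrix algebra is diagonalisable --- is a genuinely different and rather elegant alternative to the paper's explicit eigenvectors $v^K=\sum_{I\subseteq K}e^I$ (and anticipates the $\star$-product of Section~\ref{sec:inter}); just note that it relies on two small steps you leave implicit, namely that $M^{\pa}_p M^{\pa}_{p'}=M^{\pa}_{p\star p'}$ for \emph{distinct} parameter vectors (Lemma~\ref{lem:prop-PM} only treats powers of a single $M$, though the same computation works) and that absence of nilpotents forces diagonalisability of each element. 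Both constructions yield the same matrix $R$, so nothing is lost; the paper's version is shorter because the series converges outright, while yours buys an early proof of diagonalisability and a cleaner conceptual reason why the whole CM family behaves like a function algebra.
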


\begin{proof}
  Being a CM matrix implies that all eigenvalues of $M$, which are its
  diagonal elements, are non-negative numbers and bounded by
  $1$. Since $\det (M) \ne 0$, we obtain $\sigma (M) \subset
  (0,1]$. With $A=M \nts -\one$, one real logarithm can be given as
  the principal matrix logarithm \cite{Higham} via the power series
\begin{equation}\label{eq:log}
   R \, = \, \log (\one + A) \, = \sum_{k=1}^{\infty}
   \myfrac{(-1)^{k-1}}{k} A^k ,
\end{equation}  
which is convergent because the spectral radius of $A$ satisfies
$\varrho^{\pa}_{A} < 1$. The matrix $R$, by an application of the
Cayley--Hamilton theorem, is a linear combination of $A$ and finitely
many of its (positive) powers, so clearly a matrix with zero row sums.

Due to the triangular structure of $R$, which is inherited from that
of $A$ and its powers, all eigenvalues of $R$ are diagonal elements
and thus real. Further, $R$ is a CG matrix, by 
Lemma~\ref{lem:A-and-powers}, with a real parameter vector $r$ that is
unique. Since $r^{\pa}_{K} = R^{\pa}_{\varnothing K}$ for
$K\ne \varnothing$, we see that $R$ can only be a Markov generator if
all these entries are non-negative. If so, Eq.~\eqref{eq:def-prop-PG}
then implies that all off-diagonal elements of $R$ are non-negative as
well.
\end{proof}

We are now in the position to further analyse the embedding problem,
which is significantly simplified in the sense that it suffices to
check the entries of a vector rather than all off-diagonal elements of
$R$.  To do so, we can now relate the parameters of $M$ and $R$ with
the corresponding eigenvalues, and then the latter with each other via
the SMT from Eq.~\eqref{eq:SMT}.

Indeed, given a non-singular CM matrix, its
eigenvalues are labelled with $K\subseteq S$ and read
\begin{equation}\label{eq:def-r}
  \lambda^{\pa}_{K} \, = \, M^{\pa}_{KK} \, =
  \sum_{K\cup \udo{I} = K} p^{\pa}_{I} \, =
  \sum_{\udo{I}\subseteq K} p^{\pa}_{I}
\end{equation}
by an application of Eq.~\eqref{eq:def-prop-PM}, where
$\lambda^{\pa}_{\varnothing} =
M^{\pa}_{\varnothing\varnothing}=p^{\pa}_{\varnothing}$ and
$\lambda^{\pa}_{S} = M^{\pa}_{SS}=1$. At this point, we can say more about
the matrix structure as follows.

\begin{prop}\label{prop:diag}
  Every\/ \textnormal{CM} matrix is diagonalisable. All such matrices,
  for any fixed set\/ $S$, commute with one another.
\end{prop}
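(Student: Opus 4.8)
The plan is to exhibit a single, $p$-independent eigenbasis that simultaneously diagonalises every CM matrix; this settles both assertions at once, since matrices sharing a full eigenbasis are diagonalisable and commute. The idea is to read a CM matrix as a convolution operator on the commutative, idempotent monoid $(2^{S},\cup)$ and to take its characters as eigenvectors.

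First I would rewrite the action of a CM matrix $M$ on a column vector $f\in\RR^{2^{S}}$: by \eqref{eq:def-prop-PM} one has $(Mf)(I)=\sum_{J} M^{\pa}_{IJ}\,f(J)=\sum_{K\subseteq S} p^{\pa}_{K}\,f(I\cup K)$, since the condition $I\cup K=J$ determines $J$ from $K$. For each $T\subseteq S$ I then set $\chi_{T}(I)=1$ if $I\subseteq T$ and $\chi_{T}(I)=0$ otherwise; this is exactly the $T$-th column of the zeta matrix $\zeta$ of $2^{S}$, whose entries are $\zeta_{IT}=1$ precisely when $I\subseteq T$. Because $I\cup K\subseteq T$ holds if and only if $I\subseteq T$ and $K\subseteq T$, the indicator factorises, and a one-line computation gives $(M\chi_{T})(I)=\chi_{T}(I)\sum_{K\subseteq T} p^{\pa}_{K}$, i.e.\ $M\chi_{T}=\lambda_{T}\,\chi_{T}$ with $\lambda_{T}=\sum_{K\subseteq T} p^{\pa}_{K}$. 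These are precisely the eigenvalues of \eqref{eq:def-r}, and crucially the $\chi_{T}$ do not depend on $p$.

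Finally I would observe that $\zeta$ is invertible --- it is unitriangular with respect to any linear extension of $\subseteq$, its inverse being the M\"obius matrix underlying \eqref{eq:Mobius-1} --- so the $2^{N}$ vectors $\chi_{T}$ form a basis of $\RR^{2^{S}}$. Hence $M=\zeta\,\Lambda\,\zeta^{-1}$ with $\Lambda=\diag(\lambda_{T})_{T\subseteq S}$, which proves diagonalisability. As $\zeta$ is the same for every CM matrix and only $\Lambda$ depends on $p$, any two CM matrices $M=\zeta\,\Lambda\,\zeta^{-1}$ and $M'=\zeta\,\Lambda'\zeta^{-1}$ satisfy $MM'=\zeta\,\Lambda\Lambda'\zeta^{-1}=\zeta\,\Lambda'\Lambda\,\zeta^{-1}=M'M$, since diagonal matrices commute.

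The only genuine obstacle is spotting the correct eigenvectors; once the monoid viewpoint suggests the characters $\chi_{T}$ (equivalently, the columns of $\zeta$), every step is a short verification. This explicit, $p$-independent eigenbasis also neatly \emph{sidesteps} the difficulty that CM matrices may have degenerate spectra, a situation in which one could not invoke distinctness of eigenvalues to conclude diagonalisability.
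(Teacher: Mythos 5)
Your proposal is correct and is essentially the paper's own argument: your characters $\chi_T$ are exactly the eigenvectors $v^K=\sum_{I\subseteq K}e^I$ used there, with the same $p$-independent eigenbasis, the same eigenvalues $\lambda_K$ from \eqref{eq:def-r}, and the same conclusion of simultaneous diagonalisability hence commutativity. Your convolution/character framing merely streamlines the eigenvalue verification that the paper carries out by direct summation.
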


\begin{proof}
  Consider the column vectors
  $e^{I} \defeq (\delta^{\pa}_{I, J})^{\pa}_{J\subseteq S}$ with
  $I\subseteq S$, which define the standard basis of $\RR^d$ with
  $d=2^{\lvert S \rvert}$. Now, set
\[
  v^{K} \, = \sum_{\udo{I} \subseteq K} \! e^{I} \qquad \text{with }
  K \subseteq S \ts ,
\]
which are linearly independent and thus also constitute a basis. We
will now show that, when $M\in \Mat (d,\RR)$ has Property CM (so 
$M\in\cM_d$ by Fact~\ref{fact:easy}), we get
$M v^K = \lambda^{\pa}_{K} v^K$ for all $K\subseteq S$, independently of
the parameter vector $p$ of $M$. Consequently, we always have equal
algebraic and geometric multiplicities of the eigenvalues, and $M$ is
diagonalisable.

The claim follows from an explicit calculation. For $I,K\subseteq S$,
we have
\[
\begin{split}
  (M v^K )^{\pa}_{I} \, & = \sum_{\udo{J} \subseteq S} M^{\pa}_{IJ}
  \Bigl( \, \sum_{\udo{L} \subseteq K} e^L \Bigr)_J \, = \,
  \sum_{\udo{J} \subseteq S} M^{\pa}_{IJ} \sum_{\udo{L} \subseteq K}
  \delta^{\pa}_{L, J} \\[2mm]
  & = \! \sum_{I \subseteq \udo{L} \subseteq K} \! M^{\pa}_{IL} 
  \, = \! \sum_{I \subseteq \udo{L} \subseteq K} \, \sum_{I\cup \udo{U} = L}
  p^{\pa}_{U} \, = \! \sum_{I \subseteq \udo{L} \subseteq K}
  \, \sum_{\udo{V}\subseteq I} p^{\pa}_{(L-I)\cup V} \ts ,
\end{split}
\]
where we have dropped terms that are zero in the third step, and
then used \eqref{eq:def-prop-PM}. Now, the last expression is
zero whenever $I\not\subseteq K$. Otherwise, $I\subseteq K$ 
and the expression equals
\[
  \sum_{\udo{U} \subseteq K-I} \, \sum_{\udo{V}\subseteq I}
  p^{\pa}_{\ts U\cup V} \, = \sum_{\udo{J} \subseteq K} p^{\pa}_{J}
  \, = \, \lambda^{\pa}_{K}
\]
by \eqref{eq:def-r}. The two cases together establish that
$(M v^K)^{\pa}_{I} = (\lambda^{\pa}_{K} v^K )^{\pa}_{I}$ holds for all
$I,K \subseteq S$, and $v^K$ is indeed an eigenvector for
$\lambda^{\pa}_{K}$ as claimed.

Consequently, for fixed $S$,  all CM matrices are 
\emph{simultaneously} diagonalisable by the same 
parameter-independent basis matrix, and hence commute.
\end{proof}

\begin{remark}\label{rem:commute}
  Given $S$ and $d=2^{\lvert S \rvert}$, the simultaneous
    diagonalisability of all CM matrices due to
    Proposition~\ref{prop:diag} also implies that they are
    multiplicatively closed and hence form a commutative monoid within
    $\cM_d$, as we shall analyse in more detail in
    Section~\ref{sec:inter}. This property can also be seen as
    follows. For $K\subseteq S$, define the matrix $M^{(K)}$ via
\[
    M^{(K)}_{IJ} \, = \, \begin{cases} 1 , & \text{if $I\cup K = J$}, \\
        0 , & \text{otherwise} , \end{cases}
\]  
which is the CM matrix with parameter vector
$p^{(K)} = (\delta^{\pa}_{K,L})^{\pa}_{L \subseteq S}$. The matrices
$M^{(K)}$ are the $d$ extremal elements among the CM matrices, and
satisfy
\begin{equation}\label{eq:extremal}
     M^{(K)} M^{(L)} \, = \, M^{(K\cup L)} \, = \, M^{(L)} M^{(K)} .
\end{equation}  
Since every CM matrix is a convex combination of the extremal ones,
commutativity of all CM matrices follows. Various other properties can
also be derived from \eqref{eq:extremal}.  \exend
\end{remark}

From Eq.~\eqref{eq:def-r}, it is also clear that we have
\begin{equation}\label{eq:det}
  \det (M) \ne 0 \quad \Longleftrightarrow \quad
  p^{\pa}_{\varnothing} > 0 \ts ,
\end{equation}
which has the following immediate consequence.

\begin{coro}\label{coro:no-go}
  When\/ $M\in \cM_d$ has Property\/ \textnormal{CM} with\/
  $p^{\pa}_{\varnothing} = 0$, it is not embeddable, neither into
  a time-homogeneous semigroup nor into a time-inhomogeneous
  Markov flow.
\end{coro}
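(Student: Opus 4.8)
The plan is to reduce both halves of the statement to a single structural fact: the hypothesis $p^{\pa}_{\varnothing}=0$ makes $M$ singular, and no stochastic matrix that represents a positive time lag of a continuous-time Markov evolution can be singular. By Eq.~\eqref{eq:det} we have at once $\det(M)=0$; concretely, since $M$ is triangular with diagonal entries $\lambda^{\pa}_{K}=\sum_{I\subseteq K}p^{\pa}_{I}$ and $\lambda^{\pa}_{\varnothing}=p^{\pa}_{\varnothing}=0$, one factor of $\det(M)=\prod_{K\subseteq S}\lambda^{\pa}_{K}$ vanishes. Singularity of $M$ is the only property I will use.

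For the time-homogeneous case, embeddability would mean $M=\ee^{Q}$ for some Markov generator $Q$, whence $\det(M)=\det(\ee^{Q})=\ee^{\mathrm{tr}(Q)}>0$, contradicting $\det(M)=0$. Equivalently, a singular matrix already fails condition~(1) of Fact~\ref{fact:Culver} and thus has no real logarithm at all, let alone one with non-negative off-diagonal entries. Either way the semigroup case is settled.

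For the time-inhomogeneous case, recall that a Markov flow is a family $\{P(s,t)\}_{0\leqslant s\leqslant t}$ of stochastic matrices, continuous in its arguments, with $P(t,t)=\one$ and satisfying the Chapman--Kolmogorov relation $P(s,t)=P(s,u)\ts P(u,t)$ for $s\leqslant u\leqslant t$; embeddability of $M$ means $M=P(0,1)$ for such a flow. The key step is to show $\det P(0,1)\neq 0$. Choosing a partition $0=t_{0}<t_{1}<\dots<t_{n}=1$ and iterating Chapman--Kolmogorov gives $M=P(t_{0},t_{1})\cdots P(t_{n-1},t_{n})$. By continuity together with $P(t,t)=\one$, refining the partition makes every factor arbitrarily close to $\one$, hence of determinant close to $1$ and in particular non-zero; multiplicativity of the determinant then forces $\det(M)>0$, once more contradicting $\det(M)=0$. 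Under the usual regularity assumptions the same conclusion is immediate from Liouville's formula $\det P(s,t)=\exp\!\bigl(\int_{s}^{t}\mathrm{tr}\,Q(u)\dd u\bigr)>0$; compare \cite{Joh, BS4}.

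I do not expect a genuine obstacle here: the entire content is that singularity is incompatible with being the time-one map of any continuous-time Markov evolution, homogeneous or not. The only point needing a little care is the precise regularity built into the definition of a Markov flow, which is what guarantees that the factors $P(t_{k-1},t_{k})$ are close to $\one$; I would simply adopt the standard continuity hypothesis of the cited literature and not dwell on it, in keeping with the paper's stated intent to touch on the inhomogeneous case only briefly.
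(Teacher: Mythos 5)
Your proof is correct and takes essentially the same route as the paper: both cases are reduced to the singularity of $M$ from Eq.~\eqref{eq:det}, ruled out via $\det(\ee^{Q})=\ee^{\mathrm{tr}(Q)}>0$ in the homogeneous case and via Liouville's theorem for the Kolmogorov forward equation $\dot{M}(t)=M(t)\ts Q(t)$ in the inhomogeneous one. Your extra Chapman--Kolmogorov refinement argument is a harmless, slightly more elementary alternative for the flow case, but adds nothing beyond what the Liouville step already gives.
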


\begin{proof}
  Both types of embeddability require $M$ to be non-singular, via
  $\det (\ee^Q) = \ee^{\mathrm{tr} (Q)}$ and Eq.~\eqref{eq:det} in the
  first case and an application of Liouville's theorem to the
  Kolmogorov forward equation, $\dot{M}(t) = M(t) \ts Q (t)$, in the
  latter; compare \cite[Rem.~3.3]{BS4}.
\end{proof}

When $p^{\pa}_{\varnothing} > 0$, we have
$\sigma (M) \subset (0,1]$, while all eigenvalues of $R$ from
\eqref{eq:log} are still real. We can thus employ the SMT with the
standard real logarithm (which is the unique inverse of the
exponential function as a mapping from $\RR$ to $\RR_{+}$) to
calculate the eigenvalues of $R$ as
\[
  \mu^{\pa}_{K} \, = \, \log (\lambda^{\pa}_{K}) \, = \, R^{\pa}_{KK} \ts ,
  \qquad \text{with } K\subseteq S \ts .
\]
Here, $\mu^{\pa}_{S}=0$ reflects the zero row sum property of $R$.  Under
our assumptions, Proposition~\ref{prop:real-log-PG} implies the
existence of a real parameter vector
$(r^{\pa}_{K})^{\pa}_{\varnothing\ne K \subseteq S}$ so that
$R^{\pa}_{IJ} = \sum_{I\cup \udo{K}=J} r^{\pa}_{K}$ for $I\ne J$ together
with $R^{\pa}_{I \ts I} = - \sum_{I \subset \udo{J}} R^{\pa}_{IJ}$.  Since
$R$ still is a triangular matrix, we thus also get the relation
\begin{equation}\label{eq:previous}
    \mu^{\pa}_{K} \, = \, - \sum_{K\subset \udo{H}} R^{\pa}_{KH} \, =
    \, - \sum_{K\subset \udo{H}} \, \sum_{K\cup \ts\udo{I}=H}
    \! r^{\vphantom{\chi}}_{I}
    \, = \, - \! \sum_{\udo{I} \cap \, \overline{\! K} \ne \varnothing}
    \! r^{\vphantom{\chi}}_{I} \ts .
\end{equation}
It can be solved for the entries of $r$ via the M\"{o}bius inversion
formula from \eqref{eq:Mobius-1} and a few extra steps (omitted here
because we shall see a simpler approach in Remark~\ref{rem:simpler}),
which gives
\begin{equation}\label{eq:magic}
    r^{\vphantom{\chi}}_{K} \, =
    \sum_{\varnothing \ne \udo{H} \supseteq \,\overline{\! K}}
    (-1)^{\lvert H - \,\overline{\! K} \ts\rvert} \ts
      \mu^{\pa}_{\, \overline{\! H}} \ts\ts ,
    \qquad \text{with } \mu^{\pa}_{I} = \log(\lambda^{\pa}_{I}) \ts 
    \text{ and } K\ne \varnothing \ts .
\end{equation}
We have thus arrived at the following result.

\begin{theorem}\label{thm:log-relation}
  If\/ $M$ is a non-singular\/ \textnormal{CM} matrix, it has a real
  logarithm, $R=\log(\one + A)$, which satisfies Property\/
  \textnormal{CG} with the real parameter vector\/ $r$ from
  \eqref{eq:magic}.  This\/ $R$ is a Markov generator if and only if
  all entries of\/ $r$ are non-negative, in which case\/ $M$ is
  embeddable.

  Further, when\/ $M$ has simple spectrum, $R$ is the only real
  logarithm of\/ $M\nts$, and\/ $M$ is embeddable into a Markov
  semigroup if and only if\/ $R$ is a Markov generator.
\end{theorem}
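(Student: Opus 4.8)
The plan is to assemble the first assertion from Proposition~\ref{prop:real-log-PG} together with the eigenvalue bookkeeping carried out just above, and then to use the uniqueness clause of Fact~\ref{fact:Culver} to sharpen this into the equivalence of the second part.

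For the first assertion, I would simply collect what is already in place. Proposition~\ref{prop:real-log-PG} provides the real logarithm $R=\log(\one+A)$ of \eqref{eq:log}, places it in the incidence algebra with zero row sums and real spectrum, and shows that it satisfies Property~CG with a unique real parameter vector $r$ that is a Markov generator exactly when all of its entries are non-negative. The one remaining point is to identify this $r$ with the closed form \eqref{eq:magic}, which I would obtain by solving the triangular relation \eqref{eq:previous} between the known eigenvalues $\mu^{\pa}_{K}=R^{\pa}_{KK}=\log(\lambda^{\pa}_{K})$ and the unknowns $r^{\pa}_{I}$ through the M\"{o}bius inversion \eqref{eq:Mobius-1}, transported to the complemented lattice (the shorter route being that of Remark~\ref{rem:simpler}). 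To close the first part, note that when $r$ has non-negative entries, $R$ is a Markov generator, and since $R$ is the principal logarithm one has $\ee^{R}=\one+A=M$; thus $\{\ee^{tR} : t\geqslant 0\}$ is a Markov semigroup containing $M$ at $t=1$, so $M$ is embeddable.

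For the second assertion, I would first verify that the uniqueness hypothesis of Fact~\ref{fact:Culver} is met. Because $M$ is a non-singular CM matrix, \eqref{eq:def-r} and \eqref{eq:det} give $\sigma(M)\subset(0,1]$, so all eigenvalues are positive, and the extra assumption of a simple spectrum makes them pairwise distinct. Fact~\ref{fact:Culver} then guarantees that $R$ is the \emph{only} real logarithm of $M$. The equivalence follows at once: if $R$ is a Markov generator, $M=\ee^{R}$ is embeddable by the first part; conversely, if $M$ is embeddable, there is a Markov generator $Q$ with $\ee^{Q}=M$, so $Q$ is a real logarithm of $M$, whence $Q=R$ by uniqueness, and $R$ is a Markov generator.

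The hard part is concentrated in this last converse step, and it is the only place I expect a genuine subtlety. The trap is that an embedding generator $Q$ is a priori only a Markov generator: it need not lie in the incidence algebra nor satisfy Property~CG, so uniqueness merely within the CG family would be too weak. What makes the argument go through is precisely the \emph{global} uniqueness of the real logarithm supplied by Fact~\ref{fact:Culver}, which forces any such $Q$ to coincide with $R$ irrespective of its structural form. The identification of $r$ with \eqref{eq:magic} is the only calculational step, and it is routine once the inversion is set up on the complemented lattice.
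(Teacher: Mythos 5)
Your proposal is correct and follows essentially the same route as the paper: the first part is assembled from Proposition~\ref{prop:real-log-PG} together with the eigenvalue relations \eqref{eq:previous}--\eqref{eq:magic}, and the second part rests on the uniqueness clause of Fact~\ref{fact:Culver} applied to the simple, positive spectrum of $M$. Your explicit remark that the global uniqueness of the real logarithm is what rules out an embedding via a generator outside the CG family is exactly the point the paper leaves implicit in ``the final claim is clear''.
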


\begin{proof}
  The first part is a consequence of Proposition~\ref{prop:real-log-PG}
  together with our above calculations around the eigenvalues.
 
  The final claim is clear once we know that $R$ is the only real
  logarithm of $M$, which is the case when the spectrum of $M$ is
  simple, by Fact~\ref{fact:Culver}, but not in general.
\end{proof}

Formally inserting $K=\varnothing$ in \eqref{eq:magic} and observing
$\overline{\varnothing} = S$, one obtains
\[
    r^{\pa}_{\varnothing} \, = \, \mu^{\pa}_{\varnothing} \, = \,
    \log (\lambda^{\pa}_{\varnothing}) \, = \, \log (p^{\pa}_{\varnothing}) \ts ,
\]
which satisfies $r^{\pa}_{\varnothing} \leqslant 0$. From now on, we use
this definition to extend the parameter vector $r$ to one of full
dimension $2^{\lvert S \rvert}$.  Due to the zero row sum property of
$R$, one then has the relation $\sum_{K\subseteq S} r^{\pa}_{K} = 0$, as
is most easily seen by inserting $K=\varnothing$ into
\eqref{eq:previous}. Also, with this extension, we refine our previous
definition as follows.

\begin{definition}\label{def:prop-PG}
  Let\/ $S = \{ 1, 2, \ldots, N \}$ and let\/ $2^S$ be the 
  order lattice of the\/ $d=2^{N}$ subsets of\/ $S$.  A
  matrix\/ $B\in \Mat ( d, \RR )$, indexed with the elements of\/
  $2^S$, is said to have \emph{Property} CG, or to be a
  CG matrix, if there is a real vector\/ $r \in \RR^{d}$ with\/
  $\sum_{I\subseteq S} r^{\pa}_{I} = 0$ such that
\[
     B^{\pa}_{IJ} \, = \sum_{I\cup \udo{K} = J} r^{\pa}_{K}
\]
    holds for all\/ $I,J \subseteq S$, where empty sums are zero.
\end{definition}

Before we continue, we need to check that this definition is
consistent with our preliminary one from 
Eq.~\eqref{eq:def-prop-PG} above.  Indeed, in comparison with
\eqref{eq:def-prop-PG}, we see that we have the same type of condition
for all $B^{\pa}_{IJ}$ with $I\ne J$, where the element
$r^{\pa}_{\varnothing}$ never shows up. So, we have to verify that the
conditions for all $B^{\pa}_{II}$ match. From \eqref{eq:def-prop-PG}, in
analogy with \eqref{eq:previous}, we get
\[
  B^{\pa}_{II} \, = \, - \sum_{I\subset \udo{J}} B^{\pa}_{IJ} \, = \, -
  \sum_{I\subset\udo{J}} \, \sum_{I\cup\udo{K}=J} \! r^{\pa}_{K} \, =
  \, - \! \sum_{\udo{K} \cap \, \overline{\nts\nts I} \ts \ne
    \varnothing} \! r^{\pa}_{K} \, = \sum_{\udo{K}\nts\subseteq I}
  r^{\pa}_{K} \ts ,
\]
where the last step is a consequence of
$\sum_{K\subseteq S} r^{\pa}_{K} = 0$.  Definition~\ref{def:prop-PG}
simply gives
\[
    B^{\pa}_{II} \, = \sum_{I\cup\udo{K}=I} r^{\pa}_{K} \, = 
    \sum_{\udo{K} \subseteq I} r^{\pa}_{K} \ts ,
\]
which is the same sum. 

\begin{lemma}\label{lem:easy-2}
  If\/ $B$ is a\/ \textnormal{CG} matrix, it has zero row sums. If it
  is also a rate matrix, $\ee^B$ is a Markov matrix with Property\/
  \textnormal{CM}.
\end{lemma}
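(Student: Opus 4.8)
The plan is to realise the assignment \,$r \mapsto B$\, sending a parameter vector to its CG matrix as a homomorphism from a suitable commutative algebra of vectors into the incidence algebra, and then to read off both assertions from this structure. To this end I introduce the \emph{$\cup$-convolution} of vectors indexed by $2^{S}$, namely $(a * b)^{\pa}_{M} = \sum_{K\cup L = M} a^{\pa}_{K}\ts b^{\pa}_{L}$, together with the linear map $T$ that sends a vector $a$ to the matrix with entries $T(a)^{\pa}_{IJ} = \sum_{I\cup\udo{K}=J} a^{\pa}_{K}$. In this language a CM matrix is $T(p)$ for a probability vector $p$, and a CG matrix is $T(r)$ for a vector $r$ with $\sum_{K} r^{\pa}_{K} = 0$.

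The zero row sum is the easy part and I would dispatch it first: summing $B^{\pa}_{IJ} = \sum_{I\cup\udo{K}=J} r^{\pa}_{K}$ over all $J\subseteq S$, each $K$ contributes exactly once (to $J = I\cup K$), so $\sum_{J} B^{\pa}_{IJ} = \sum_{K} r^{\pa}_{K} = 0$ for every $I$, using the normalisation built into Definition~\ref{def:prop-PG}. For the exponential, the key computation is that grouping the double sum in $T(a)\ts T(b)$ according to $J = I\cup K\cup L$ yields $T(a)\ts T(b) = T(a * b)$; thus $T$ is a unital, commutative algebra homomorphism, sending the convolution unit $e^{\varnothing}$ (the indicator of $\varnothing$) to $\one$. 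Writing $B = T(r)$ and using $T(r)^{n} = T(r^{*n})$, I then obtain
\[
   \ee^{B} \, = \sum_{n\geqslant 0} \myfrac{1}{n\ts !}\, T(r)^{n}
   \, = \, T\Bigl( \sum_{n\geqslant 0} \myfrac{1}{n\ts !}\, r^{*n} \Bigr)
   \, = \, T(p) , \qquad p \defeq \exp_{*}(r) ,
\]
where the interchange of $T$ with the sum is justified by linearity and continuity of the fixed finite-dimensional map $T$. This already exhibits $\ee^{B}$ in CM form, so it only remains to check that $p$ is a probability vector; Fact~\ref{fact:easy} will then give that $\ee^{B}$ is Markov.

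For the normalisation I would note that the augmentation $a\mapsto \sum_{K} a^{\pa}_{K}$ is multiplicative for $*$, whence $\sum_{K} p^{\pa}_{K} = \ee^{\sum_{K} r^{\pa}_{K}} = \ee^{0} = 1$. Positivity is where the rate-matrix hypothesis enters: evaluating $B^{\pa}_{\varnothing K} = r^{\pa}_{K}$ for $K\ne\varnothing$, the non-negativity of the off-diagonal entries of the generator forces $r^{\pa}_{K}\geqslant 0$ for all $K\ne\varnothing$, while $r^{\pa}_{\varnothing} = -\sum_{K\ne\varnothing} r^{\pa}_{K}\leqslant 0$. Splitting $r = r^{\pa}_{\varnothing}\ts e^{\varnothing} + r'$ with $r'$ supported on non-empty sets and non-negative, and using that $e^{\varnothing}$ is the convolution unit, gives $p = \ee^{r^{\pa}_{\varnothing}}\exp_{*}(r')$; since $*$ preserves non-negativity, every $(r')^{*n}$ and hence $\exp_{*}(r')$ is non-negative, so $p\geqslant 0$. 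Thus $p$ is a probability vector, $\ee^{B} = T(p)$ is a CM matrix, and it is Markov by Fact~\ref{fact:easy}.

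The step I expect to demand the most care is the homomorphism identity $T(a)\ts T(b) = T(a * b)$, whose index bookkeeping underpins the clean formula $\ee^{B} = T(\exp_{*}(r))$, together with the positivity argument for $\exp_{*}(r)$ via the splitting into the central part $r^{\pa}_{\varnothing}\ts e^{\varnothing}$ and the non-negative remainder $r'$. The remaining ingredients are either one-line sums or a routine appeal to the continuity of a linear map.
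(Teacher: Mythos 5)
Your argument is correct, but it takes a genuinely different route from the paper's. The paper first establishes that $\ee^{B}$ is a Markov matrix via Euler's limit formula $\ee^{B}=\lim_{n}(\one+\frac{1}{n}B)^{n}$ (using that $\one+\frac{1}{n}B$ is eventually non-negative), then observes that $\ee^{B}-\one$ is again a CG matrix by the algebra structure, and finally reads off the CM parameter vector $p$ from the $\varnothing$-row, inferring its non-negativity \emph{from} the already-proved Markov property. You invert this logic: you prove the homomorphism identity $T(a)\ts T(b)=T(a*b)$ for the $\cup$-convolution, obtain $\ee^{B}=T(\exp_{*}(r))$ directly, and establish that $\exp_{*}(r)$ is a probability vector from first principles --- normalisation via the multiplicative augmentation, positivity via the splitting $r=r^{\pa}_{\varnothing}\ts e^{\varnothing}+r'$ and the fact that $*$ preserves non-negativity --- before invoking Fact~\ref{fact:easy} to conclude Markovianity. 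In effect you are anticipating the moduli-space machinery of Section~\ref{sec:inter}: your $*$ is the $\star$ of \eqref{eq:def-mult} and your $\exp_{*}$ is the map $\nexp$, so you prove the content of Corollary~\ref{coro:exp} along the way, whereas the paper derives the positivity of $\nexp(r)$ in Section~\ref{sec:inter} by appealing back to this very lemma. What your route buys is a self-contained, purely combinatorial positivity proof at the level of parameter vectors that bypasses the matrix Euler limit; what the paper's route buys is brevity, since it reuses the standard generator-to-semigroup fact and the already-developed CG algebra. The only point to tidy is the convergence of $\sum_{n} r^{*n}/n\ts!$ itself before pulling $T$ through the sum; this is routine (e.g.\ note $r^{*n}$ is the $\varnothing$-row of $B^{n}$, or use a submultiplicative norm as in Fact~\ref{fact:Banach}), and you correctly flag it as the continuity step.
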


\begin{proof}
  The first claim is the analogue of Fact~\ref{fact:easy}, and has the
  same proof (with the parameter vector $p$ replaced by $r$). This
  means that $0 \in \sigma (B)$ with eigenvector
  $v = ( 1,1, \ldots , 1)^{\trans}$.

  If $B$ is a Markov generator, $\ee^B$ is a Markov matrix; this
  follows easily from \cite[Eq.~10.2]{Higham},
\[
     \ee^B \, = \lim_{n\to\infty} \bigl( \one + \tfrac{1}{n} B \bigr)^{n} ,
\]
by observing that, for all sufficiently large $n$, the matrix
$\one + \frac{1}{n} B$ has non-negative entries only. As $Bv=0$,
the required row sum condition is a consequence of $\ee^B v = v$.

Now, observe that $\ee^B = \one + A$, where $A$ is a CG matrix with
non-negative off-diagonal entries. It thus has a parameter vector $r$
with non-negative elements (except at index $\varnothing$). One
verifies that $\one + A$ satisfies Eq.~\eqref{eq:def-prop-PM} with
parameter vector
$p = ( r^{\pa}_{K} + \delta^{\pa}_{\varnothing, K})^{\pa}_{K \subseteq S}$,
which is a probability vector. This establishes Property CM for
$\ee^B$.
\end{proof} 

So, we now have a completely analogous way to define Properties CM and
CG, both with a real parameter vector that agrees with one row of the
matrix. The difference is the row sum, which is $1$ for CM and
$0$ for CG, as we know it from the row sums of the corresponding
matrices. Since Definitions~\ref{def:PM} and \ref{def:prop-PG} show
the same structure with respect to the parameter vector, we can 
repeat the arguments from Proposition~\ref{prop:diag} and
Remark~\ref{rem:commute} to get the following.

\begin{coro}\label{coro:diag}
  Any\/ \textnormal{CG} matrix is diagonalisable. All such matrices,
  for any fixed\/ $S$, commute with one another, and thus form an
  Abelian subalgebra of $\ts\AAA^{(0)}_{d}\! $, where\/
  $d = 2^{\lvert S \rvert}\nts $. \qed
\end{coro}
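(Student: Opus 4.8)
The plan is to transcribe the proof of Proposition~\ref{prop:diag}, replacing the probability vector $p$ by the real vector $r$ from Definition~\ref{def:prop-PG}. First I would reuse the same, parameter-independent basis of eigenvectors, namely $v^{K} = \sum_{\udo{I}\subseteq K} e^{I}$ for $K\subseteq S$, which we already know to be a basis. Since Definition~\ref{def:prop-PG} provides the uniform formula $B^{\pa}_{IJ} = \sum_{I\cup\udo{K}=J} r^{\pa}_{K}$ for \emph{all} $I,J\subseteq S$ (the diagonal case having been checked for consistency just above), the entries of $B$ have exactly the same shape as those of a CM matrix in \eqref{eq:def-prop-PM}, only with $p$ replaced by $r$. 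The computation of $(B\ts v^{K})^{\pa}_{I}$ is therefore verbatim the one in Proposition~\ref{prop:diag}: it vanishes unless $I\subseteq K$, and otherwise collapses to $\sum_{\udo{J}\subseteq K} r^{\pa}_{J} = B^{\pa}_{KK} \eqdef \nu^{\pa}_{K}$. Hence $B\ts v^{K} = \nu^{\pa}_{K}\ts v^{K}$, so $B$ has equal algebraic and geometric multiplicities for every eigenvalue and is diagonalisable.

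Because the eigenbasis $\{v^{K}\}$ does not depend on $r$, all CG matrices are simultaneously diagonalised by one and the same fixed basis matrix, exactly as in the closing line of Proposition~\ref{prop:diag}. Matrices sharing a common eigenbasis commute, which yields the Abelian property, and each CG matrix lies in $\AAA^{(0)}_{d}$ by the zero row sum statement of Lemma~\ref{lem:easy-2}. It then remains only to verify that the CG matrices form an \emph{algebra}, that is, a linear subspace closed under multiplication. Linearity is recorded just after \eqref{eq:def-prop-PG}, so the one genuine point is multiplicative closure, which I expect to be the main obstacle.

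To settle closure I would make the eigenvalue bookkeeping explicit. The relation $\nu^{\pa}_{K} = \sum_{\udo{J}\subseteq K} r^{\pa}_{J}$ is, by the M\"{o}bius inversion \eqref{eq:Mobius-1}, a linear bijection between vectors $r\in\RR^{d}$ and eigenvalue vectors $(\nu^{\pa}_{K})^{\pa}_{K\subseteq S}$, under which the defining constraint $\sum_{K\subseteq S} r^{\pa}_{K}=0$ corresponds precisely to $\nu^{\pa}_{S}=0$. Thus the CG matrices are exactly those matrices diagonalised by $\{v^{K}\}$ whose $S$-eigenvalue vanishes. Given two CG matrices $B, B'$ with eigenvalues $\nu^{\pa}_{K}$ and $\nu'_{K}$, the product $B B'$ is again diagonalised by $\{v^{K}\}$, now with eigenvalues $\nu^{\pa}_{K}\ts\nu'_{K}$, and its $S$-eigenvalue is $\nu^{\pa}_{S}\ts\nu'_{S}=0$. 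By the bijection, $BB'$ is therefore a CG matrix, which is the CG analogue of the multiplicative closure \eqref{eq:extremal} from Remark~\ref{rem:commute}; commutativity follows from the shared eigenbasis, and closure under linear combinations is immediate for the same reason. Hence the CG matrices form an Abelian subalgebra of $\AAA^{(0)}_{d}$.
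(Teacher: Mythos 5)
Your proof is correct and follows essentially the route the paper intends: the paper's own justification is simply the remark that Definitions~\ref{def:PM} and \ref{def:prop-PG} have the same structure, so one may repeat the arguments of Proposition~\ref{prop:diag} and Remark~\ref{rem:commute}; your verbatim transcription of the eigenvector computation with $p$ replaced by $r$ is exactly that, and the fixed eigenbasis $\{v^K\}$ gives simultaneous diagonalisability and hence commutativity. The one place you diverge is the multiplicative closure: the paper's pointer to Remark~\ref{rem:commute} suggests writing $B_r=\sum_K r^{\pa}_K M^{(K)}$ as a \emph{linear} combination of the extremal matrices and using \eqref{eq:extremal}, so that $B_r B_s = B_{r\star s}$ with $\sum_K (r\star s)^{\pa}_K = \bigl(\sum_K r^{\pa}_K\bigr)\bigl(\sum_K s^{\pa}_K\bigr)=0$ (this is the $\star$-product of Section~\ref{sec:inter}), whereas you argue spectrally via the M\"obius bijection $r\leftrightarrow\nu$ and the condition $\nu^{\pa}_S=0$. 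Both arguments are valid and of comparable length; the extremal decomposition has the advantage of producing the parameter vector of the product explicitly, while your spectral characterisation of CG matrices as the $\{v^K\}$-diagonalisable matrices with vanishing $S$-eigenvalue is a clean reformulation that makes closure under products and linear combinations immediate.
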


\begin{remark}\label{rem:simpler}
  The above derivation can now be used to calculate the semigroup
  $\{ \ee^{\ts t \ts Q} : t \geqslant 0 \}$ non-recursively, for any
  rate matrix $Q$ with Property CG. Indeed, if $r$ is the parameter
  vector for $Q$, the eigenvalues of $Q$ are its diagonal elements,
  $\mu^{\pa}_{K} = Q^{\pa}_{KK}$ with $K\subseteq S$, and we have the
  relations from Eq.~\eqref{eq:previous}, with $R$ replaced by $Q$. In
  fact, with Definition~\ref{def:prop-PG}, we simply get
\begin{equation}\label{eq:new-version}
     \mu^{\pa}_{K} \, = \, Q^{\pa}_{KK} \, = \sum_{\udo{I} \subseteq K} r^{\pa}_{I} 
     \qquad \text{and} \qquad r^{\pa}_{K} \, = \sum_{\udo{I} \subseteq K}
     (-1)^{\lvert K - I \rvert} \mu^{\pa}_{I} \ts ,
\end{equation}
which replaces Eq.~\eqref{eq:magic} and its (omitted) derivation.
Clearly, $t \ts Q$ with $t\geqslant 0$ still has Property CG, now with
parameter vector $t\ts r$ and eigenvalues $t \mu^{\pa}_{K}$.  Then, for
any $t\geqslant 0$, the eigenvalues of $M(t)=\ee^{\ts t \ts Q}$ are
$\lambda^{\pa}_{K} (t) = \ee^{\ts t \ts \mu^{\pa}_{K}}$, with
$K\subseteq S$.
   
If $Q$ has Property CG, the same applies to $Q^{\ts k}$ for $k\in\NN$,
which need not be rate matrices for $k>1$, though they are always
elements of $\AAA^{(0)}_{d}$. Indeed, profiting from the consistent
definition of $r^{\pa}_{\varnothing}$, we can now mimic our arguments for
Lemma~\ref{lem:prop-PM} to see that $r^{(2)} = r \ts Q$ is the
parameter vector for $Q^2$ and we get those of $Q^{\ts k}$ recursively
via $r^{(k+1)}=r^{(k)} Q$ for $k\geqslant 1$.  Then, the exponential
series in conjunction with the Cayley--Hamilton theorem tells us that
$\ee^{\ts t \ts Q} = \one + A(t)$ holds for every $t\geqslant 0$,
where $A(t)$ is a polynomial in $Q$ and its (positive) powers, hence a
CG matrix.  This implies that each $M(t)$ is a CM matrix, with a
unique parameter vector $p (t)$. The latter satisfies the
time-dependent analogue of Eq.~\eqref{eq:def-r}, which can be solved
for the entries of $r$ by M\"{o}bius inversion, giving
\begin{equation}\label{eq:p-k}
  p^{\pa}_{K} (t) \, = \sum_{\udo{J}\subseteq K} (-1)^{\lvert K \nts -
    \ts J\rvert} \ts \lambda^{\pa}_{J} (t) \, = \sum_{\udo{J}\subseteq K}
  (-1)^{\lvert K \nts - \ts J\rvert} \ts \ee^{\ts t \ts \mu^{\pa}_{\nts
      J}} \, = \sum_{\udo{J} \subseteq K} (-1)^{\lvert K \nts - \ts
    J\rvert} \exp \Bigl( t \sum_{\udo{I}\subseteq J} r^{\pa}_{I} \Bigr) .
\end{equation}
Now, we simply obtain
$M^{\pa}_{IJ} (t) = \sum_{I\cup\udo{K} = J}\ts p^{\pa}_{K} (t)$ for each
$t\geqslant 0$ as in \eqref{eq:def-prop-PM}, and thus a simple,
explicit approach to the semigroup.  \exend
\end{remark}

When, in Theorem~\ref{thm:log-relation}, we hit a situation where more
than one generator exists for $M$ (which can only happen for the
non-generic case of multiple eigenvalues, and even then at most for
very small values of $\det (M)$; compare \cite{Davies, BS3} and
references therein), only one can have Property CG. This is so
because $M = \ee^R = \ee^{R'}$ implies
$\one = \ee^R \ee^{-R'} = \ee^{R-R'}$ (because $R$ and $R'$ commute)
and thus $R=R'$ (because $R-R'$ is diagonalisable with all
eigenvalues being $0$).  In a case with more than one embedding,
two (or more) Markov semigroups cross each other in $M$, but the
extra solutions rarely have an interesting probabilistic
interpretation.

At this point, in view of Theorem~\ref{thm:log-relation}, we need to
understand when we get non-negative entries $r^{\pa}_{K}$ for all
$K\ne \varnothing$. As it turns out, this is best looked at in a
probabilistic fashion, which we do after a closer inspection of the
model structure.

\section{Intermezzo: Algebraic and analytic properties of the
  model}\label{sec:inter}

Here, we develop an alternative picture via identifying suitable
moduli spaces for our matrix classes together with some algebraic
and analytic features. Though this is equivalent to the full matrix
formulation, it directly works with the parameter vectors and
exhibits a structure of independent interest. As above, we use
$S=\{ 1, 2, \ldots , N \}$, its power set $2^S$ as a lattice with
order relation $\subseteq$, and $\RR^d$ with $d=2^{N}$ as the vector
space of row vectors, indexed by the elements of $2^S$. In particular,
we write $x\in\RR^d$ as $x = (x^{\pa}_{I})^{\pa}_{I\subseteq S}$.

Earlier, we considered the family of CM matrices $M \in \cM_d$,
which had matrix elements
$M^{\pa}_{IJ} = \sum_{I\cup \udo{K} = J} p^{\pa}_{K}$ for some $p\in\RR^d$
with non-negative entries only and $\sum_{I\subseteq S} p^{\pa}_{I} = 1$.
In other words, the $(d-1)$-dimensional probability simplex
\[
  \mathbb{S}_{d-1} \, = \, \{ p \in \RR^d : \text{all } p^{\pa}_{I}
  \geqslant 0 \, \ts \text{ and }
  \textstyle{\sum_{I\subseteq S}} \, p^{\pa}_{I} = 1 \}
\]
is the moduli space of this family of matrices. Note that the matrix
family is a closed convex subset of $\cM_d$, which is matched by
$\mathbb{S}_{d-1}$ being a closed convex simplex in $\RR^d$. It has
$d$ extremal elements, namely the vectors
$e^{(K)} = (\delta^{\pa}_{K,L})^{\pa}_{L\subseteq S}$, which was used in
Remark~\ref{rem:commute}.

In view of the role of the parameter vectors $p$ and their behaviour
under multiplication of CM matrices, we now define a multiplication
$\star : \RR^d \times \RR^d \xrightarrow{\quad} \RR^d$ by
\begin{equation}\label{eq:def-mult}
  (x \star y)^{\pa}_K \, \defeq \sum_{\udo{I} \subseteq K}
  \, \sum_{I \cup \udo{J}=K} x^{\pa}_{I} \, y^{\pa}_{J} \ts .
\end{equation}
It has a remarkable property as follows, which relates to 
Proposition~\ref{prop:diag} and Remark~\ref{rem:commute}.

\begin{lemma}\label{lem:sum-rule}
  The multiplication defined by \eqref{eq:def-mult} is commutative and
  satisfies the sum rule
\[
  \sum_{\udo{K} \subseteq S} ( x \star y)^{\pa}_{K}
  \, = \, \biggl( \, \sum_{\udo{I}\subseteq S} x^{\pa}_{I} \biggr)
  \biggl( \, \sum_{\udo{J}\subseteq S} y^{\pa}_{J} \biggr) .
\]
\end{lemma}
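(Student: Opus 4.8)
The plan is to treat the two assertions of the lemma separately, since each reduces to an elementary manipulation of the double sum defining $\star$ in \eqref{eq:def-mult}. For commutativity, I would first rewrite the defining sum in the equivalent unordered form
\[
   (x \star y)^{\pa}_{K} \, = \! \sum_{\substack{I, J \subseteq K \\ I \cup J = K}} \! x^{\pa}_{I} \, y^{\pa}_{J} ,
\]
noting that the constraint $I \cup J = K$ is symmetric under exchanging the two summation variables. Relabelling $I \leftrightarrow J$ and using the commutativity of the real product $x^{\pa}_{I} y^{\pa}_{J} = y^{\pa}_{J} x^{\pa}_{I}$ then gives $(x \star y)^{\pa}_{K} = (y \star x)^{\pa}_{K}$ for every $K \subseteq S$, hence $x \star y = y \star x$.

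For the sum rule, I would start from the left-hand side and interchange the order of summation, writing
\[
   \sum_{\udo{K} \subseteq S} (x \star y)^{\pa}_{K} \, = \sum_{\udo{K} \subseteq S} \, \sum_{\substack{I, J \subseteq S \\ I \cup J = K}} \! x^{\pa}_{I} \, y^{\pa}_{J} .
\]
The key step is to recognise that, as $K$ ranges over all subsets of $S$, the inner index sets $\{ (I,J) : I \cup J = K \}$ partition the full product set $2^S \times 2^S$: every ordered pair $(I,J)$ of subsets of $S$ has a unique union $K = I \cup J$ and therefore contributes to exactly one value of $K$. Reindexing the double sum accordingly collapses the constraint and yields
\[
   \sum_{\udo{I}\subseteq S} \, \sum_{\udo{J}\subseteq S} x^{\pa}_{I} \, y^{\pa}_{J}
   \, = \, \Bigl( \, \sum_{\udo{I}\subseteq S} x^{\pa}_{I} \Bigr)
   \Bigl( \, \sum_{\udo{J}\subseteq S} y^{\pa}_{J} \Bigr) ,
\]
which is the claimed identity.

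There is no genuine obstacle here; the only point requiring care is the reindexing step, namely the verification that the map $(I,J) \mapsto I \cup J$ exhibits $2^S \times 2^S$ as the disjoint union of the fibres indexed by $K \subseteq S$, so that no pair is omitted or double-counted. Once this bookkeeping is made explicit, both statements follow without further computation. I would also remark in passing that the same reorganisation of the double sum underlies the simultaneous diagonalisability and commutativity established in Proposition~\ref{prop:diag} and Remark~\ref{rem:commute}, so that the lemma is the parameter-level shadow of those matrix identities.
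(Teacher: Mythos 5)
Your proposal is correct and follows essentially the same route as the paper's own proof: rewriting $(x\star y)^{\pa}_{K}$ as the symmetric sum over pairs $(I,J)$ with $I\cup J=K$ for commutativity, and observing that the fibres of $(I,J)\mapsto I\cup J$ partition $2^S\times 2^S$ for the sum rule. No gaps.
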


\begin{proof}
  By definition, one has
\[
    (x \star y)^{\pa}_{K} \, = \sum_{\udo{I}\subseteq K} 
      \sum_{I\cup\udo{J} = K} x^{\pa}_{I} \, y^{\pa}_{J} \, = \!
      \sum_{\substack{I,J\subseteq K \\ I \cup J^{\vphantom{I}} = K}}
      \! x^{\pa}_{I} \, y^{\pa}_{J} \, = \sum_{\udo{J}\subseteq K}
       \sum_{J\cup \udo{I} = K} y^{\pa}_{J} \, x^{\pa}_{I}
      \, = \, (y \star x)^{\pa}_{K} \ts ,
\]  
which establishes commutativity, while the normalisation follows from
\[
  \sum_{\udo{K} \subseteq S} (x \star y)^{\pa}_{K}
  \, = \sum_{\udo{K} \subseteq S} \, 
        \sum_{ \substack{ I,J \subseteq K \\ I\cup J^{\vphantom{I}}=K}}
          x^{\pa}_{I} \, y^{\pa}_{J} \, =  \sum_{I,J \subseteq S} 
        x^{\pa}_{I} \, y^{\pa}_{J} \, =  \sum_{\udo{I}\subseteq S} x^{\pa}_{I} \,
        \sum_{\udo{J}\subseteq S} y^{\pa}_{J} \ts ,
\]
which completes the argument.  
\end{proof}

If the entries of $x$ and $y$ sum to $\alpha$, those of $x\star y$ sum
to $\alpha^2$, whence the cases $\alpha = 1$ and $\alpha = 0$ are
special. Let us analyse this a bit more.  Clearly, if
$r,s \in \mathbb{S}_{d-1}$, then also $r\star s \in \mathbb{S}_{d-1}$,
and we get that $(\mathbb{S}_{d-1}, \star)$ is an Abelian monoid, that
is, an Abelian semigroup with unit. The latter is
$\varepsilon = (\delta^{\pa}_{\varnothing, K})^{\pa}_{K\subseteq S}$,
because $\varepsilon \star x = x$ for all $x$, as one can easily
verify.

Due to the underlying order lattice, it is possible to equip 
$\RR^d$ with a suitable norm,
\begin{equation}\label{eq:norm}
  \| x \| \, \defeq \, \max_{K\subseteq S} \; \bigl|
  \textstyle{\sum_{\udo{I}
    \subseteq K} } \; x^{\pa}_{I} \bigr| \ts ,
\end{equation}
which is the spectral radius of our original matrices
and gives the following structure.

\begin{fact}\label{fact:Banach}
  The mapping\/ $x \mapsto \| x \|$ derived from \eqref{eq:norm}
  defines a norm on\/ $\RR^d$ that is submultiplicative for\/ $\star$
  and turns\/ $(\RR^d,+,\star)$ into a Banach algebra.
\end{fact}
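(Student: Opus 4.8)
The plan is to verify that $\|\cdot\|$ is a norm, show $\star$ is submultiplicative with respect to it, and finally establish completeness, so that $(\RR^d, +, \star)$ becomes a Banach algebra.

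First I would establish that $\|\cdot\|$ is a norm. The key observation is that the linear map $\Phi : \RR^d \to \RR^d$ given by $(\Phi x)^{\pa}_K = \sum_{\udo{I}\subseteq K} x^{\pa}_I$ is a bijection (its inverse is M\"{o}bius inversion \eqref{eq:Mobius-1}), so that $\|x\| = \|\Phi x\|_{\infty}$ is the pullback of the sup-norm under an invertible linear map. This immediately gives homogeneity, the triangle inequality, and definiteness (since $\Phi x = 0$ forces $x = 0$). This is the cleanest way to see all the norm axioms at once, and it also makes the identification with the spectral radius of the associated CM/CG matrices transparent, since the eigenvalues are exactly the partial sums $\sum_{\udo{I}\subseteq K} x^{\pa}_I$ by \eqref{eq:def-r}.

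Next I would prove submultiplicativity, $\|x \star y\| \leqslant \|x\|\,\|y\|$. The crucial structural fact is that $\star$ diagonalises under $\Phi$: combining the defining formula \eqref{eq:def-mult} with the calculation in the proof of Lemma~\ref{lem:sum-rule}, one checks that $\Phi(x\star y)_K = \sum_{\udo{I}\subseteq K}(x\star y)^{\pa}_I = \bigl(\sum_{\udo{I}\subseteq K} x^{\pa}_I\bigr)\bigl(\sum_{\udo{J}\subseteq K} y^{\pa}_J\bigr) = (\Phi x)_K\,(\Phi y)_K$, so that $\star$ becomes the componentwise (Hadamard) product in the transformed coordinates. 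Submultiplicativity is then immediate, since $\|x\star y\| = \max_K |(\Phi x)_K (\Phi y)_K| \leqslant (\max_K |(\Phi x)_K|)(\max_K |(\Phi y)_K|) = \|x\|\,\|y\|$. This componentwise reformulation is the heart of the argument and the step I expect to be the main point worth getting right; everything else follows formally once $\star$ is recognised as pointwise multiplication under $\Phi$.

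Finally, completeness is automatic: $\RR^d$ is finite-dimensional, so it is complete in any norm, and $\star$ is bilinear hence continuous, making $(\RR^d, +, \star)$ a finite-dimensional Banach algebra. The only remaining subtlety is whether one wants a \emph{unital} Banach algebra; here the unit $\varepsilon = (\delta^{\pa}_{\varnothing, K})^{\pa}_{K\subseteq S}$ from the preceding discussion satisfies $\Phi\varepsilon = (1,1,\ldots,1)^{\trans}$, so $\|\varepsilon\| = 1$, confirming the normalisation of the unit and completing the verification. Thus the only genuinely substantive work is the diagonalisation of $\star$ via $\Phi$, from which both the norm identity and submultiplicativity drop out with no further computation.
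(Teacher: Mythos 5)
Your proof is correct and takes essentially the same route as the paper: the paper likewise derives non-degeneracy of the norm from M\"obius inversion (which is precisely the invertibility of your map $\Phi$) and submultiplicativity from the identity $\sum_{I\subseteq K}(x\star y)^{\pa}_{I}=\bigl(\sum_{I\subseteq K}x^{\pa}_{I}\bigr)\bigl(\sum_{J\subseteq K}y^{\pa}_{J}\bigr)$, which it only describes as ``a computation analogous to the one for Lemma~\ref{lem:sum-rule}''. Your packaging of this as the statement that $\Phi$ intertwines $\star$ with the componentwise product is a clean and fully correct way of making the paper's terse argument explicit, and all remaining steps (completeness in finite dimension, $\|\varepsilon\|=1$) are fine.
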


\begin{proof}
  $\| x \| \geqslant 0$ for all $x\in\RR^d$ is clear, while
  non-degeneracy is a consequence of the M\"obius inversion formula
  from \eqref{eq:Mobius-1}. Likewise,
  $\| \alpha \ts x \| = \lvert \alpha \rvert \, \| x \|$ holds for all
  $\alpha \in\RR$, while the triangle inequality follows from a simple
  calculation.

  The norm is submultiplicative,
  $\| x \star y \| \leqslant \| x \| \, \| y \|$ for all
  $x,y\in\RR^d$, as follows from a computation analogous to the one
  for Lemma~\ref{lem:sum-rule}. As $\RR^d$ is closed with respect to
  $\|.\|$, the Banach algebra property is clear.
\end{proof}

Next, we look at the CG matrices, hence those with a
parameter vector $r = (r^{\pa}_{I})^{\pa}_{I\subset S} \in \RR^d$
subject to the condition that $\sum_{\udo{I}\subseteq S} r^{\pa}_{I} = 0$.  
These matrices have zero row sums, but need not be
rate matrices, which they are if and only if $r^{\pa}_{I} \geqslant 0$
holds for all $I\ne\varnothing$. Here, the CG matrices form a
subalgebra of $\AAA^{(d)}_{\ts 0}$ of dimension $d-1$, and its 
moduli space is
\[
  \XX \, = \{ x \in \RR^d : x^{\pa}_{1} + \ldots + x^{\pa}_{d} = 0 \}
  \, \simeq \, \RR^{d-1} ,
\]
where $x+y$ and $x \star y$ correspond to the sum and the product of
the matrices parametrised by $x$ and $y$, respectively, with the
correct distributive law. In other words, $(\XX, +, \star)$ is an
Abelian algebra, but without a multiplicative unit. It is closed under
limits in the usual topology. \smallskip

Now, consider the Cauchy (or initial value) problem
\begin{equation}\label{eq:Cauchy}
    \dot{M} (t) \, = \, M(t) \ts Q(t) \quad \text{with} \quad
    M(0) \, = \, \one \ts ,
\end{equation}
where $\{ Q(t) : t\geqslant 0\}$ is any continuous family of CG
matrices.  Since $Q(t)$ and $Q(s)$ commute for all $t,s\geqslant 0$ by
Corollary~\ref{coro:diag}, the solution is simply given by
\begin{equation}\label{eq:sol}
     M (t) \, = \, \exp \bigl( R (t)\bigr) \quad \text{with} \quad
     R(t) \, =   \int_{0}^{t} Q (\tau) \dd \tau \ts ,
\end{equation}
as follows from the standard theory of matrix-valued, ordinary
differential equations; we refer to \cite[Ch.~IV]{Walter} for
background.  Clearly, since the algebra of CG matrices is closed under
taking limits, $R(t)$ is a CG matrix for every $t\geqslant 0$, and we
have the following result.

\begin{coro}\label{coro:flow}
   The forward flow of the Cauchy problem \eqref{eq:Cauchy},
   with\/ $\{ Q(t) : t \geqslant 0 \}$ a continuous family of\/ 
   \textnormal{CG} matrices, consists of Markov matrices with 
   Property\/ \textnormal{CM} only. Further, 
   every individual\/ $M(t)$ from this flow is embeddable into a
   time-homogeneous Markov semigroup generated by a rate matrix 
   with Property\/ \textnormal{CG}.   \qed 
\end{coro}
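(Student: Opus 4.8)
The plan is to read the result straight off the closed-form solution \eqref{eq:sol}, using only the closure and exponentiation properties of CG matrices already in place. The decisive structural fact is commutativity: because $Q(\tau)$ and $Q(\sigma)$ commute for all $\tau,\sigma$ by Corollary~\ref{coro:diag}, the forward flow is not a genuine time-ordered product but collapses to the honest exponential $M(t) = \ee^{R(t)}$ of the single matrix $R(t) = \int_0^t Q(\tau)\dd\tau$. Thus the whole time-inhomogeneous problem reduces, at each fixed $t$, to a time-homogeneous one, and both assertions of the corollary become visible from this single formula.

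First I would upgrade the remark preceding the statement---that $R(t)\in\XX$, i.e.\ $R(t)$ is a CG matrix by closure of $\XX$ under limits and hence under integration of a continuous family---to the sharper claim that $R(t)$ is a \emph{Markov generator}. Its row sums vanish because it is CG, by the first part of Lemma~\ref{lem:easy-2}; and its off-diagonal entries $R(t)_{IJ} = \int_0^t Q(\tau)_{IJ}\dd\tau$ are non-negative for $I\ne J$, since each $Q(\tau)$ is a rate matrix and the integral of a non-negative continuous function is non-negative. In the moduli picture this is the statement that the parameter vector of $R(t)$ is $\int_0^t r(\tau)\dd\tau$, whose entries indexed by $K\ne\varnothing$ are non-negative because those of each $r(\tau)$ are; the closed convex cone of CG rate matrices is preserved under such positive integration. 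Hence $R(t)$ is a CG rate matrix.

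Given this, the first assertion is immediate: by the second part of Lemma~\ref{lem:easy-2}, $\ee^{R(t)}$ is a Markov matrix with Property CM, so every matrix of the forward flow lies in the CM family. For the second assertion I would simply display $M(t) = \ee^{R(t)}$ as the time-one element of the time-homogeneous semigroup $\{\ee^{sR(t)} : s\geqslant 0\}$; since $R(t)$ is a CG rate matrix, this exhibits each individual $M(t)$ as embeddable into a homogeneous Markov semigroup whose generator has Property CG, as claimed. The boundary case $t=0$ is trivial, with $M(0)=\one=\ee^{0}$.

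I expect no genuine obstacle, since the substantive work is already done upstream: commutativity (Corollary~\ref{coro:diag}) is what licenses the clean exponential form \eqref{eq:sol}, and Lemma~\ref{lem:easy-2} supplies the passage between CG generators and CM matrices. The one point that needs care---rather than difficulty---is the hypothesis that the $Q(\tau)$ are Markov generators (CG rate matrices), as is implicit in the reading of \eqref{eq:Cauchy} as the forward equation of a Markov flow. Without non-negative off-diagonal entries, $R(t)$ need not be a generator and $\ee^{R(t)}$ can fail to be Markov already for $\lvert S\rvert = 1$, so the non-negativity preserved under integration in the first step is precisely the property carrying the argument.
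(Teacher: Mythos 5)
Your proof is correct and follows essentially the same route as the paper, which states the corollary with a \qed{} because it is immediate from the commutativity-based closed form \eqref{eq:sol}, the closure of the CG class under integration, and Lemma~\ref{lem:easy-2}. Your observation that the $Q(\tau)$ must actually be rate matrices (not merely CG matrices) for the conclusion to hold is a hypothesis the paper leaves implicit in the phrase ``forward flow'', and you are right that the preservation of off-diagonal non-negativity under integration is the step that carries the argument.
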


The continuity condition can be relaxed considerably, by replacing the
differential equation with the corresponding Volterra integral
equation; see \cite{BS4} and references therein for more. Instead, let
us interpret \eqref{eq:sol} in terms of the moduli spaces introduced
above. To do so, we write CG matrices as $Q_q$, hence indexed with the
corresponding parameter vector, and analogously use $M_p$ for CM
matrices. Now, in Eq.~\eqref{eq:sol}, we clearly have
$R(t) = R_{r(t)}$ with
\begin{equation}\label{eq:int}
    r(t) \, = \int_{0}^{t} q(\tau) \dd \tau \ts ,
\end{equation}
where $q(\tau)$ denotes the parameter vector of $Q(\tau)$. 

It is now possible to calculate $\exp \bigl( R(t) \bigr)$, for every
fixed $t\geqslant 0$, as follows, where we drop the explicit notation
for time dependence for clarity. Consider the exponential series for
$\ee^R$ and observe that $R^n$, with $n\in\NN$, has parameter vector
$r^{(n)} = r\star \cdots \star r \eqdef r^{\star n}$. Further,
$\one = R^{\ts 0}$ is a CM matrix with our unit $\varepsilon$ from
above as parameter vector. If we set $r^{\star 0}\defeq \varepsilon$,
we can now define the mapping $\nexp : \XX \xrightarrow{\quad} \RR^d$
by
\[
  r \, \mapsto \,   \nexp (r) \, \defeq
  \sum_{n=0} \myfrac{r^{\star n}}{n \ts !} \ts ,
\]
which is convergent and defines a vector whose entries sum to $1$.
More generally, $\nexp (x)$ is well defined for all $x\in\RR^d$, where
convergence follows from a Weierstrass $M$-test via the estimate
$\| \nexp (x) \| \leqslant \ee^{\|x\|}$ with the norm from
\eqref{eq:norm} and Fact~\ref{fact:Banach}.

In fact, also Euler's relation still holds for all $x\in\RR^d$, namely
\[
  \nexp (x) \, = \lim_{n\to\infty} \Bigl( \varepsilon +
  \myfrac{x}{n}\Bigr)^{\star n} ,
\]
which can then be used (as in the proof of Lemma~\ref{lem:easy-2}) to
show that $\nexp (r)$, for $r\in\XX$, is indeed a probability vector,
as it must.

\begin{coro}\label{coro:exp}
  In the setting of Corollary~$\ref{coro:flow}$, each matrix\/ $M(t)$
  from the flow is a\/ \textnormal{CM} matrix with parameter vector\/
  $p (t) = \nexp\bigl( r(t)\bigr)$ with\/ $r(t)$ as in \eqref{eq:int}.
  \qed
\end{coro}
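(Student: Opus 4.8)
The plan is to verify that the parameter-vector formulation mirrors the matrix exponential exactly, so that Corollary~\ref{coro:exp} follows by transporting the identity $M(t)=\ee^{R(t)}$ from matrices to parameter vectors. First I would recall from the discussion preceding the statement that the map $p\mapsto M_p$ sending a parameter vector to its CG/CM matrix is linear and, crucially, intertwines the operations on the two sides: by the defining formula \eqref{eq:def-mult} together with the computation in Lemma~\ref{lem:prop-PM}, one has $M_{x}\ts M_{y}=M_{x\star y}$, while ordinary matrix addition corresponds to vector addition. Thus the assignment $x\mapsto M_x$ is an algebra homomorphism from $(\RR^d,+,\star)$ into $\Mat(d,\RR)$, carrying the unit $\varepsilon$ to $\one$.

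Next I would observe that this homomorphism is continuous, indeed isometric, for the norm $\|\cdot\|$ from \eqref{eq:norm}, since that norm is by construction the spectral radius of the corresponding matrix and the matrices here are simultaneously diagonalisable by Corollary~\ref{coro:diag}. Consequently the homomorphism commutes with the convergent exponential series on both sides: since $R(t)^{\ts n}$ has parameter vector $r(t)^{\star n}$ (as noted in the text just before the corollary, via $r^{(n)}=r^{\star n}$), summing the series term by term gives
\[
   \ee^{R(t)} \, = \sum_{n=0}^{\infty} \myfrac{R(t)^{\ts n}}{n\ts !}
   \, = \, M_{\ts\nexp(r(t))} \ts .
\]
Because $R(t)=R_{r(t)}$ with $r(t)$ as in \eqref{eq:int}, the left-hand side is exactly the flow matrix $M(t)$ from \eqref{eq:sol}, so $M(t)=M_{\ts\nexp(r(t))}$.

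It then remains to identify $\nexp(r(t))$ as the (unique) parameter vector of $M(t)$, which is immediate once we know $M(t)$ is a CM matrix: by Lemma~\ref{lem:easy-2} the exponential of a CG matrix is a CM matrix, and by the uniqueness of the parametrising probability vector noted after Fact~\ref{fact:easy} there is exactly one such vector, necessarily $p(t)=\nexp(r(t))$. The one point deserving care — which I expect to be the main obstacle — is checking that $\nexp(r(t))$ is a genuine probability vector, i.e.\ has non-negative entries summing to $1$; this is precisely the content of the Euler-limit argument sketched just above the statement, where writing $\nexp(r)=\lim_{n}(\varepsilon+\frac{r}{n})^{\star n}$ and using that $\varepsilon+\frac{1}{n}r$ has non-negative entries for large $n$ (since $r\in\XX$ has zero sum and only $r_\varnothing$ can be negative) guarantees non-negativity in the limit, while the sum rule of Lemma~\ref{lem:sum-rule} forces the entries to sum to $1$. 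With that in hand the corollary is complete.
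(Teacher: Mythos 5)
Your proposal is correct and follows essentially the same route as the paper: the corollary carries a \qed precisely because the paragraphs preceding it already perform this term-by-term identification of $\ee^{R(t)}$ with the matrix parametrised by $\sum_n r(t)^{\star n}/n\ts!$, using $R^n \leftrightarrow r^{\star n}$, the Banach-algebra convergence from Fact~\ref{fact:Banach}, and the Euler-limit argument for positivity. One small slip: belonging to $\XX$ alone does not make $r^{\pa}_{K}$ non-negative for $K\ne\varnothing$ --- that follows because the $Q(\tau)$ in Corollary~\ref{coro:flow} are rate matrices, so that $r(t)\in\XX_{\geqslant}$; alternatively, non-negativity is already supplied by your appeal to Lemma~\ref{lem:easy-2}.
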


Clearly, one has $\nexp (0) = \varepsilon$. Observing that the
binomial formula for $\star$ holds in the form
\[
     (x+y)^{\star n} \, = \sum_{m=0}^{n} \binom{n}{m} 
      \, x^{\star m} \star y^{\star (n-m)} ,
\]
we see that $\nexp (.)$ satisfies
$\nexp (x+y) = \nexp (x) \star \nexp (y)$.  Recalling the computations
from Remark~\ref{rem:simpler}, one can evaluate $\nexp (r)$ explicitly
via
\[
     \bigl( \nexp (r)\bigr)_{K} \, = \sum_{\udo{J} \subseteq K}  
     (-1)^{\lvert K-J\rvert} \exp \Bigl( \, \sum_{\udo{I}\subseteq J} 
     r^{\pa}_{I} \Bigr) \, = \sum_{\udo{J}\subseteq K}
     (-1)^{\lvert K-J\rvert} \prod_{\udo{I}\subseteq J} \ee^{r^{\pa}_{I}} .
\]
If we set
$\YY \defeq \{ y \in \RR^d : \sum_{K\subseteq S} y^{\pa}_{K} = 1 \}$, we
know that $\nexp (\XX) \subset \YY$. It would be of interest to
characterise $\nexp (\XX)$ more explicitly. Further, with
\[
    \XX_{\geqslant} \, \defeq \, \{ x \in \XX : x^{\pa}_{K} \geqslant 0
     \text{ for all } \varnothing \ne K \subseteq S \} \ts ,
\]
we clearly have
$\nexp (\XX_{\geqslant}) \subset \mathbb{S}_{d-1} \subset \YY$, where
$\nexp (\XX_{\geqslant})$ is the moduli space of the embeddable cases
in $\mathbb{S}_{d-1}$, which we still need to analyse.  So far, we
have the following.

\begin{coro}
  Let\/ $M$ be a\/ \textnormal{CM} matrix with parameter vector\/ $p$.
  Then, there is a\/ \textnormal{CG} matrix\/ $R$ with\/ $M=\ee^R$ if
  and only if the equation\/ $p = \nexp (x)$ has a solution\/
  $x\in\XX$. Further, $R$ is a Markov generator if and only if\/
  $x\in\XX_{\geqslant}$.  \qed
\end{coro}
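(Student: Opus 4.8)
The plan is to read both equivalences off the moduli-space dictionary assembled above, since almost all of the work is already contained in Corollary~\ref{coro:exp} and Proposition~\ref{prop:real-log-PG}. The one identity I rely on throughout is that the CG matrix $R_x$ with parameter vector $x\in\XX$ satisfies $\ee^{R_x} = M_{\nexp(x)}$: indeed $R_x^{\ts n}$ has parameter vector $x^{\star n}$ and $\one = R_x^{\ts 0}$ has parameter vector $\varepsilon = x^{\star 0}$, so the exponential series has parameter vector $\sum_{n\geqslant 0} x^{\star n}/n\ts! = \nexp(x)$; this is precisely Corollary~\ref{coro:exp} applied to a constant family. I also use that the assignment $p\mapsto M_p$ is injective, which is immediate from $(M_p)_{\varnothing K} = p_K$ (and likewise $(R_x)_{\varnothing K} = x_K$), so that a matrix determines its parameter vector uniquely.

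With these in hand, the first equivalence is formal. For the implication from right to left, given a solution $x\in\XX$ of $p = \nexp(x)$, I set $R = R_x$; this is a CG matrix and $\ee^{R} = M_{\nexp(x)} = M_p = M$. Conversely, if some CG matrix $R$ satisfies $M = \ee^{R}$, I let $x\in\XX$ be its parameter vector, so that $M = \ee^{R_x} = M_{\nexp(x)}$; since both $p$ and $\nexp(x)$ lie in $\YY$, the injectivity of $p\mapsto M_p$ forces $p = \nexp(x)$, exhibiting the desired solution.

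For the second equivalence I would repeat the off-diagonal argument from Proposition~\ref{prop:real-log-PG}. By Lemma~\ref{lem:easy-2} the matrix $R = R_x$ has zero row sums, and it is triangular, so it is a Markov generator exactly when its off-diagonal entries are non-negative. If $R_x$ is a generator, then in particular $(R_x)_{\varnothing K} = x_K \geqslant 0$ for every $K\ne\varnothing$, that is, $x\in\XX_{\geqslant}$. Conversely, if $x\in\XX_{\geqslant}$, then for $I\subset J$ the entry $(R_x)_{IJ} = \sum_{I\cup K = J} x_K$ runs only over sets $K\supseteq J\setminus I\ne\varnothing$, so each summand has $K\ne\varnothing$ and is therefore non-negative; together with the vanishing of all entries with $I\not\subseteq J$, this makes $R_x$ a rate matrix, hence a Markov generator.

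The arguments are short because the substantive content sits in the earlier results. The only points that need care are the index bookkeeping across the two parameter conventions --- in particular that $x_{\varnothing}$ never enters an off-diagonal matrix entry but does enter the normalisation $\sum_K x_K = 0$ --- and the verification that $\nexp(x)\in\YY$, which is what licenses invoking injectivity of $p\mapsto M_p$ on the nose rather than merely up to normalisation.
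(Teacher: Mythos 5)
Your argument is correct and follows exactly the route the paper intends: the corollary is stated with no written proof because it is meant to be read off the moduli-space dictionary ($\ee^{R_x}=M^{\pa}_{\nexp(x)}$ from the discussion around Corollary~\ref{coro:exp}, injectivity via $(M^{\pa}_{p})^{\pa}_{\varnothing K}=p^{\pa}_{K}$, and the off-diagonal positivity argument already given in Proposition~\ref{prop:real-log-PG}), and your write-up supplies precisely those details with the correct handling of the index $\varnothing$.
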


In view of Fact~\ref{fact:Banach}, it is possible to define a
logarithm via
\[
  \nlog (\varepsilon + x) \, \defeq \sum_{n=1}^{\infty}
  \myfrac{(-1)^{n-1}}{n}\, x^{\star n} ,
\]
which is convergent for all $x\in\RR^d$ with $\| x \| < 1$. This
indeed defines the inverse function for $\nexp$, and one can now
formulate the entire embedding problem in terms of the two moduli
spaces and their behaviour under the mappings $\nexp$ and $\nlog$.
Still, the final task to understand the positivity condition needs
further insight, which turns out to have a probabilistic root. Let us
thus return to the original embeddability question.

\section{Positivity conditions and embeddability}\label{sec:pos}

Recall that $Z\subseteq S$ denotes the (random) set of objects that
are sampled in one step, with $\PP (Z=K) = p^{\pa}_{K}$ for
$K\subseteq S$.  In what follows, a crucial role will be played by the
probability of a relevant `non-event',
\begin{equation}\label{eq:non-event}
   q^{\pa}_{K} \, \defeq \, \PP ( Z \text{ avoids } K ) 
   \, = \, \PP (Z \cap K = \varnothing) \, = 
   \sum_{\udo{I} \subseteq \, \overline{\! K}} p^{\pa}_{I} \, = \, 
   1 - \! \! \sum_{\udo{J} \cap K \ne \varnothing} \! p^{\pa}_{J} \ts ,
\end{equation}
where the last step is a consequence of $p$ being a probability
vector.  In particular, one has
$q^{\pa}_{\varnothing} = \sum_{\udo{I} \subseteq S} p^{\pa}_{I} = 1$, as it
must be, and $q^{\pa}_{K} \geqslant q^{\pa}_{H}$ whenever $K\subseteq H$.
The $q^{\pa}_{K}$ are related with the eigenvalues of $M$ via
\begin{equation}\label{eq:prop-eigenvalues}
  \lambda^{\pa}_{K} \, = \, M^{\pa}_{KK} \, =  \!
  \sum_{K\cup \udo{I}=K} \! p^{\pa}_{I} \, =
  \sum_{\udo{I} \subseteq K} p^{\pa}_{I} \, = \,
  \PP (Z \cap \,\overline{\! K} = \varnothing) \, = \,
  q^{\pa}_{\, \overline{\! K}} \ts .
\end{equation}
This also gives
$\lambda^{\pa}_{\varnothing} = q^{\pa}_{S} = p^{\pa}_{\varnothing}$, which must
be positive for embeddability by Corollary~\ref{coro:no-go}. We
therefore assume, for the rest of this section, that
$p^{\pa}_{\varnothing}>0$, which then also implies that $q^{\pa}_{K}>0$
holds for all $K\subseteq S$ via \eqref{eq:non-event}.

Since we now have $\mu^{\pa}_{\, \overline{\! K}} = \log \bigl(
\lambda^{\pa}_{\, \overline{\!  K}} \bigr) = \log (q^{\pa}_{K})$, we
can use \eqref{eq:magic} to also get
\begin{equation}\label{eq:para-prob}
  r^{\pa}_{K} \, = \sum_{\varnothing \ne \udo{H} \supseteq \, \overline{\! K}}
  (-1)^{\lvert H - \, \overline{\! K} \ts \rvert } \ts \log( q^{\pa}_{H}) \, = \,
  \log \! \prod_{\varnothing \ne \udo{H} \supseteq \, \overline{\! K}} \!
  q^{(-1)^{\lvert H - \, \overline{\! K} \ts \rvert }}_{H}
\end{equation}
for $K\ne \varnothing$, where (for the embedding problem) we
need to know when they all satisfy $r^{\pa}_{K} \geqslant
0$. Alternatively, due to our new version \eqref{eq:new-version} with
the full parameter vector $r$, we can also write
\begin{equation}\label{eq:para-variant}
    r^{\pa}_{K} \, = \sum_{\udo{H}\subseteq K} 
    (-1)^{\lvert K - H \rvert} \log (q^{\pa}_{\, \overline{\! H}}) \, = \, \log 
    \prod_{\udo{H}\subseteq K} q^{(-1)^{\lvert K-H \rvert}}_{\, \overline{\! H}} ,
\end{equation}
now for all $K\subseteq S$, where
$r^{\pa}_{\varnothing} = \log (q^{\pa}_{S}) = \log (p^{\pa}_{\varnothing})$. A
simple calculation with the M\"{o}bius inversion from
Eq.~\eqref{eq:Mobius-1} then gives
\[
  \log (q^{\pa}_{K}) \, = \sum_{\udo{H} \subseteq \,\overline{\! K}}
  r^{\pa}_{H} \, = \, - \!\sum_{\udo{I} \cap K \ne \varnothing} \!
  r^{\pa}_{I} \; \leqslant \; 0 \ts .
\]
Before we continue, let us look at a simple special case.

\begin{example}\label{ex:zwei}
  If $S=\{ 1,2 \}$, we have
  $\cP(S) = \big\{ \varnothing, \{ 1 \}, \{ 2 \}, S \big\}$, and thus
  get 
\begin{align*}
  r^{\pa}_{S} \ts & = \,\log \myfrac{q^{\pa}_{S}\, q^{\pa}_{\varnothing}}
       {q^{\pa}_{\{ 1 \}} q^{\pa}_{\{ 2 \}} } \, = \,
       \log \myfrac{q^{\pa}_{S}}{q^{\pa}_{\{ 1 \}} q^{\pa}_{\{ 2 \}} } \, = \,
       \log \myfrac{\lambda^{\pa}_{\varnothing}}
       {\lambda^{\pa}_{\{1\}}\lambda^{\pa}_{\{2\}}} \, , \\
  r^{\pa}_{\{ 1 \}} \, & = \, \log \myfrac{q^{\pa}_{\{ 2 \}}}{q^{\pa}_{S}}
    \, = \, \log \myfrac{\lambda^{\pa}_{\{1\}}}{\lambda^{\pa}_{\varnothing}}
         \, , \quad r^{\pa}_{\{ 2 \}} \ts = \, \log
         \myfrac{q^{\pa}_{\{ 1 \}}}{q^{\pa}_{S}}
    \, = \, \log \myfrac{\lambda^{\pa}_{\{2\}}}{\lambda^{\pa}_{\varnothing}}
\end{align*}
together with $r^{\pa}_{\varnothing} = \log (q^{\pa}_{S})$, so the sum of
all four coefficients vanishes, as it must. Here, due to
$q^{\pa}_{K} \geqslant q^{\pa}_{H}$ for $K\subseteq H$, the conditions
$r^{\pa}_{\{ 1 \}} \geqslant 0$ and $ r^{\pa}_{\{ 2 \}} \geqslant 0$ are
automatically satisfied, which can also be understood via the
conditional probabilities
\[
  r^{\pa}_{\{ i \} } \, = \, - \log \myfrac{q^{\pa}_{S}}{q^{\pa}_{S - \{ i \}}}
  \, = \, - \log \PP \bigl( Z \cap \{ i \} = \varnothing \mid Z
  \cap (S - \{ i \}) = \varnothing \bigr) \ts .
\]
For the remaining coefficient, we have
\begin{equation}\label{eq:two-cond}
  r^{\pa}_{S} \geqslant 0 \; \Longleftrightarrow \; \Delta q \defeq
    q^{\pa}_{S \vphantom{\{\}}} - q^{\pa}_{\{ 1 \} }
    q^{\pa}_{ \{ 2 \}} \geqslant 0 \ts ,
\end{equation}
where $\Delta q$ has the interpretation of a correlation, either
between the two non-events $\{ 1 \not\in Z \}$ and $\{ 2 \not\in Z \}$
or (equivalently) between the events $\{ 1 \in Z \}$ and
$\{ 2 \in Z \}$.  Here, one has
$\PP ( i \in Z ) = p^{\pa}_{\{ i \}} \nts + p^{\pa}_{S \vphantom{\{ \}}}$
for $i\in S$ and $\PP ( Z = S) = p^{\pa}_{S}$, which then gives
\[
  \Delta q  \, = \, p^{\pa}_{\varnothing \vphantom{\{\}}} \ts
    p^{\pa}_{S \vphantom{\{\}}} - p^{\pa}_{\{ 1 \}} p^{\pa}_{\{ 2 \}}
\]
by a simple calculation via \eqref{eq:non-event} and the relation
$\sum_{K\subseteq S} p^{\pa}_{K} = 1$.   \exend
\end{example}

In Example~\ref{ex:zwei}, the required non-negativity depends on just
one condition, namely Eq.~\eqref{eq:two-cond}, which is equivalent to
\[
  \PP ( 1,2 \in Z ) - \PP (1 \in Z) \ts \PP ( 2\in Z)
  \, \geqslant \, 0 \ts .
\]
This suggests the following probabilistic interpretation. Consider the
continuous-time MCCP with $r^{\pa}_{\{ 1 \}} r^{\pa}_{\{ 2 \}} > 0$, to
avoid degeneracies, and $r^{\pa}_{\{1, 2\}} \geqslant 0$. For
$r^{\pa}_{\{1, 2\}} = 0$, we are back to Example~\ref{ex:reco} and its
independence structure, so
\[
  \PP ( 1,2 \in Z) \, = \, \bigl(1 - \ee^{- r^{\pa}_{\{1\}} t } \bigr) 
    \bigl(1 - \ee^{- r^{\pa}_{\{2\}} t } \bigr) \, = \, 
    \PP (1 \in Z) \ts \PP ( 2\in Z) \ts ,
\]
so there is no correlation. If we now add joint sampling at rate
$r^{\pa}_{\{1, 2\}} >0$, we obtain
\[
\begin{split}
  \PP (i \in Z ) \, & = \, p^{\pa}_{\{i\}} (t) + p^{\pa}_{\{1,2\}} (t) \, = \,
       1 - \ee^{- t (r^{\pa}_{\{i\}} \nts + \ts r^{\pa}_{\{1,2\}} )  } \quad 
       \text{for $i\in\{1,2\}$ and} \\
  \PP ( 1,2 \in Z) \, & = \, p^{\pa}_{\{1,2\}} (t) \, = \, 1 - 
        \ee^{- t (r^{\pa}_{\{1\}} \nts + \ts r^{\pa}_{\{1,2\}})}  -
        \ee^{- t (r^{\pa}_{\{2\}} \nts + \ts r^{\pa}_{\{1,2\}})}  +
        \ee^{- t (r^{\pa}_{\{1\}} \nts + \ts r^{\pa}_{\{2\}} \nts + 
        \ts r^{\pa}_{\{1,2\}})} ,
\end{split}
\]
by using the formula for $p^{\pa}_{K} (t)$ from \eqref{eq:p-k}. This
gives $\PP (1,2 \in Z) > \PP (1 \in Z) \ts \PP ( 2\in Z)$ and thus a
positive correlation, in line with the intuition for this simple case.
Put differently, there is no way to achieve a negative correlation
within the continuous-time process, whereas, in contrast, objects can
very well avoid each other in discrete time --- just set
$p^{\pa}_{\{ 1 \}} p^{\pa}_{\{ 2 \}} > 0$ together with
$p^{\pa}_{\{1, 2\}} = 0$.  \smallskip

These considerations might trigger the naive question whether
$r^{\pa}_{K} \geqslant 0$ for all $\varnothing \ne K \subseteq S$ could
be equivalent to $C^{\pa}_{K} \geqslant 0$, where
\begin{equation}\label{eq:corr}
     C^{\pa}_{K} \, = \sum_{\udo{\cA} \in \cP (K)}
        (-1)^{\lvert \cA \rvert - 1} \bigl( \lvert \cA \rvert - 1\bigr)!
        \prod_{A \in \cA} \PP (A \subseteq Z)
\end{equation}
is the correlation function of the events $\{ i \in Z\}$ for $i\in K$.
In fact, this is not true, but points in the right direction. To
proceed, we need to consider larger sets $S$.

Indeed, the situation gets more involved for sets $S$ with
$\lvert S \rvert \geqslant 3$.  Here, one still has
\[
  r^{\pa}_{\{ i \}} \, = \, \log \myfrac{q^{\pa}_{S-\{ i \}}}{q^{\pa}_{S}} \, = \,
  \log \myfrac{p^{\pa}_{\varnothing \vphantom{ \{\} }} + 
  p^{\pa}_{\{ i \} } }{p^{\pa}_{\varnothing \vphantom{ \{  \} }}} \, \geqslant \, 0
\]
for $i\in S$, with non-negativity for the same reason as in
Example~\ref{ex:zwei}, and the analogous interpretation in terms of
conditional probabilities. For $i, j\in S$ with $i\ne j$, we get 
\[
   r^{\pa}_{ \{ i, j  \} } \, = \, 
   \log \myfrac{q^{\pa}_{S \vphantom{\{\}} } \, q^{\pa}_{S-\{ i, j \} }}
   {q^{\pa}_{S - \{ i \} } \, q^{\pa}_{S - \{ j \} } }  \, = \, \log
   \myfrac{p^{\pa}_{\varnothing \vphantom{\{\}}} 
     \bigl( p^{\pa}_{\varnothing \vphantom{\{\}}} + p^{\pa}_{ \{ i \} }
     + p^{\pa}_{ \{ j \} } + p^{\pa}_{ \{ i, j \} } \bigr) }
   {\bigl( p^{\pa}_{\varnothing \vphantom{\{\}}} + p^{\pa}_{\{i\}}\bigr)
       \bigl( p^{\pa}_{\varnothing \vphantom{\{\}}} + p^{\pa}_{\{j\}} \bigr)} 
\]
from \eqref{eq:para-variant}, which implies
\[
  r^{\pa}_{ \{ i, j \} } \geqslant 0 \;\; \Longleftrightarrow \;\;
  q^{\pa}_{S \vphantom{\{\}}} \, q^{\pa}_{S - \{ i, j \}} \geqslant q^{\pa}_{S
    - \{ i \} } \, q^{\pa}_{S - \{ j \} } \;\; \Longleftrightarrow \;\;
  p^{\pa}_{\varnothing \vphantom{\{\}}} \, p^{\pa}_{\{ i, j \}} \geqslant
  p^{\pa}_{ \{ i \} } \, p^{\pa}_{ \{ j \} } \ts .
\]
For $S = \{ 1, 2\}$, this simplifies as stated earlier because one
then has $q^{\pa}_{S - \{ i, j \} } = q^{\pa}_{\varnothing \vphantom{\{\}}} = 1$.

Next, for $i,j,k \in S $ distinct, one obtains
\[
\begin{split}
  r^{\pa}_{ \{ i, j, k \} } \, & = \, \log \myfrac{q^{\pa}_{ S - \{ i, j, k
      \}} \, q^{\pa}_{S - \{ i \} } \, q^{\pa}_{S - \{ j \} } \, q^{\pa}_{S -
      \{ k \} } } {q^{\pa}_{S \vphantom{\{\}}} \, q^{\pa}_{S- \{ i, j \}} \,
    q^{\pa}_{S - \{ i, k \} } \, q^{\pa}_{S - \{ j, k \}}} \\[1mm]
  & = \, \log \myfrac{q^{\pa}_{S - \{ i, j, k\}}}{q^{\pa}_{S \vphantom{\{\}}}}
        - \log \myfrac {q^{2}_{S - \{ i, j, k \} } }
           {q^{\pa}_{S- \{ i, j \} } \, q^{\pa}_{S - \{ k \} }}
        - \log \myfrac{q^{2}_{S - \{ i, j, k \} } }
           {q^{\pa}_{S- \{ i, k \} } \, q^{\pa}_{S - \{ j \} }} 
     \\[1mm] & \qquad  - \log \myfrac{q^{2}_{S - \{ i, j, k \} } }
           {q^{\pa}_{S- \{ j, k \} } \, q^{\pa}_{S - \{ i \} }}
        + 2 \log  \myfrac{q^{3}_{S - \{ i, j, k \} } }
          {q^{\pa}_{S - \{ i, j \} } \, q^{\pa}_{S - \{ i, k \} } \,
           q^{\pa}_{S - \{ j, k \} }} \ts .
\end{split}
\]
Again, for $S = \{ 1, 2, 3 \}$, one gets a slight simplification due
to $q^{\pa}_{S - \{ i, j, k \} } = q^{\pa}_{\varnothing \vphantom{\{\}}} = 1$.
The quantities $r^{\pa}_{ \{ i, j, k \} }$ cannot reasonably be
expressed in terms of the parameters $p^{\pa}_{K}$.

To see what is going on in general, we define, for $A,B \subseteq S$,
the quantities
\begin{equation}\label{eq:qab}
    q^{B}_{A} \, \defeq \, \PP (Z\cap A = \varnothing \mid
       Z \cap B = \varnothing ) \, = \, 
       \myfrac{\PP (Z\cap (A\cup B) = \varnothing) }
           {\PP (Z\cap B = \varnothing)} \, = \,
       \myfrac{q^{\pa}_{A\cup B}}{q^{\pa}_{B}} \ts ,
\end{equation}
where $q^{\varnothing}_{A} = q^{\pa}_{A}$ because $q^{\pa}_{\varnothing}=1$,
and one also has 
\[
   q^{\,\overline{\! B}}_{\nts A\cup \: \overline{\! B}} \, = \,
   \myfrac{q^{\pa}_{\nts A\cup \: \overline{\! B}}}{q^{\pa}_{\ts\overline{\! B}} }
   \, = \, q^{\,\overline{\! B}}_{\nts A\vphantom{\overline{B}}} \ts .
\]
Now, we can formulate the following result.

\begin{prop}\label{prop:set-to-part}
   For any non-empty subset\/ $K\subseteq S$, we have the identity
\[
  r^{\pa}_{K} \, = \, (-1)^{\lvert K \rvert} \! \sum_{\udo{\cA} \in \cP
    (K)} (-1)^{\lvert \cA \rvert - 1} \bigl( \lvert \cA \rvert -
  1\bigr) ! \, \log \bigl( q^{\, \overline{\! K}}_{\cA} \ts \bigr) ,
\]   
where\/ $q^{B}_{\cA} \defeq \prod_{A\in \cA} q^{B}_{A}$ for any
partition\/ $\cA \in \cP (K)$ and any\/ $B\subseteq S$.
\end{prop}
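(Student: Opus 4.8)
The plan is to reduce the claim to the combinatorial identity of Lemma~\ref{lem:set-to-part}, applied to the set $K$ in place of $S$. The natural starting point is the closed form \eqref{eq:para-variant}, namely $r^{\pa}_{K} = \sum_{H\subseteq K} (-1)^{\lvert K-H\rvert}\log(q^{\pa}_{\,\overline{\! H}})$. Reindexing this sum by the complementary set $A = K-H$, so that $H = K-A$ again ranges over subsets of $K$ while $\,\overline{\! H} = A \cup \,\overline{\! K}$, turns it into $r^{\pa}_{K} = \sum_{A\subseteq K} (-1)^{\lvert A\rvert}\log(q^{\pa}_{A\cup\,\overline{\! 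K}})$. This already exhibits the subset-sum structure over $2^{K}$ that Lemma~\ref{lem:set-to-part} is designed to convert into a sum over $\cP(K)$.

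Next I would pass from the unconditional probabilities $q^{\pa}_{A\cup\,\overline{\! K}}$ to the conditional ones $q^{\,\overline{\! K}}_{A} = q^{\pa}_{A\cup\,\overline{\! K}}/q^{\pa}_{\,\overline{\! K}}$ from \eqref{eq:qab}, writing $\log(q^{\pa}_{A\cup\,\overline{\! K}}) = \log(q^{\,\overline{\! K}}_{A}) + \log(q^{\pa}_{\,\overline{\! K}})$. Summing this against $(-1)^{\lvert A\rvert}$ over all $A\subseteq K$, the constant term $\log(q^{\pa}_{\,\overline{\! K}})$ drops out because $\sum_{A\subseteq K}(-1)^{\lvert A\rvert} = 0$ for $K\ne\varnothing$. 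Since the $A=\varnothing$ contribution also vanishes (as $q^{\,\overline{\! K}}_{\varnothing}=1$), I am left with $r^{\pa}_{K} = \sum_{\varnothing\ne A\subseteq K}(-1)^{\lvert A\rvert}\log(q^{\,\overline{\! K}}_{A})$. Rewriting $(-1)^{\lvert A\rvert} = (-1)^{\lvert K\rvert}(-1)^{\lvert K-A\rvert}$ then presents $r^{\pa}_{K}$ as $(-1)^{\lvert K\rvert}$ times precisely the right-hand side of Lemma~\ref{lem:set-to-part}, with $S$ replaced by $K$ and $f(A)=q^{\,\overline{\! K}}_{A}$.

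To close, I would invoke Lemma~\ref{lem:set-to-part} to replace that subset sum by the partition sum $\sum_{\cA\in\cP(K)}(-1)^{\lvert\cA\rvert-1}(\lvert\cA\rvert-1)!\,\log(q^{\,\overline{\! K}}_{\cA})$, with $q^{\,\overline{\! K}}_{\cA}=\prod_{A\in\cA}q^{\,\overline{\! K}}_{A}$ as the multiplicative extension, which yields exactly the claimed identity. The one hypothesis to verify before applying the lemma is that $f(A)=q^{\,\overline{\! K}}_{A}$ is strictly positive; this holds because the standing assumption $p^{\pa}_{\varnothing}>0$ forces $q^{\pa}_{H}>0$ for every $H$ via \eqref{eq:non-event}, so all quotients are well defined and positive. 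I expect the only genuinely nontrivial point to be the recognition that it is the conditional probabilities $q^{\,\overline{\! K}}_{A}$, rather than the raw $q^{\pa}_{A\cup\,\overline{\! K}}$, which are the correct multiplicative building blocks: the cancellation of the $\log(q^{\pa}_{\,\overline{\! K}})$ terms is exactly what makes the product $q^{\,\overline{\! K}}_{\cA}$ over the blocks of a partition emerge, matching the multiplicative hypothesis of Lemma~\ref{lem:set-to-part}.
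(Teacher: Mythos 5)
Your proposal is correct and follows essentially the same route as the paper's proof: reindex \eqref{eq:para-variant} by $A=K-H$, use $\sum_{A\subseteq K}(-1)^{\lvert A\rvert}=0$ to pass to the conditional probabilities $q^{\,\overline{\! K}}_{A}$, and then apply Lemma~\ref{lem:set-to-part} with $f(A)=q^{\,\overline{\! K}}_{A}$. The explicit check that $f$ is strictly positive (via $p^{\pa}_{\varnothing}>0$) is a welcome addition that the paper leaves implicit.
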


\begin{proof}
From \eqref{eq:para-variant}, upon substituting $A=K\nts -H$, one gets
\[
\begin{split}
  r^{\pa}_{K} \, & = \sum_{\udo{A} \subseteq K} (-1)^{\lvert A \rvert}
  \log (q^{\pa}_{A\cup \, \overline{\! K}}) \, = \sum_{\udo{A} \subseteq
    K} (-1)^{\lvert A \rvert} \bigl( \log (q^{\pa}_{A\cup \, \overline{\!
      K}})
  - \log (q^{\pa}_{\, \overline{\!K}}) \bigr) \\[1mm]
  & = \sum_{\udo{A} \subseteq K} (-1)^{\lvert A \rvert} \log \bigl(
  q^{\, \overline{\! K}}_{A} \bigr) \, = 
  \sum_{\varnothing \ne \udo{A} \subseteq K} (-1)^{\lvert A \rvert} 
  \log \bigl( q^{\, \overline{\! K}}_{A} \bigr) \\[1mm]  
  & = \,  (-1)^{\lvert K \rvert} \!
  \sum_{\udo{\cA}\in \cP (K)} (-1)^{\lvert \cA \rvert - 1} \bigl(
  \lvert \cA \rvert - 1 \bigr) ! \, \log \bigl( q^{\, \overline{\!
      K}}_{\cA} \ts \bigr) ,
\end{split}
\]
where the second step is true because
$\sum_{\udo{A} \subseteq K} (-1)^{\lvert A \rvert} = 0$, while the
next two steps follow from \eqref{eq:qab} together with 
$q^{\overline{K}}_{\varnothing} = 1$.  Now, we can invoke
Lemma~\ref{lem:set-to-part} with $f(A) = q^{\, \overline{\! K}}_{A}$.
\end{proof}

If we compare the formula for the $r^{\pa}_{K}$ with the expression for
the $C^{\pa}_{K}$ from \eqref{eq:corr}, one notices the same basic
structure, but three important differences. First, we have the
appearance of logarithms, which is perhaps not surprising. Second, the
probabilities $\PP (A\subseteq Z)$ are replaced by the conditional
probabilities
$\PP ( Z\cap A = \varnothing \mid Z \cap \,\overline{\! K} =
\varnothing )$. Finally, there is an extra factor
$(-1)^{\lvert K \rvert}$, which is due to the appearance of non-events
instead of events.

At this point, we can wrap up the embedding structure as follows.

\begin{theorem}\label{thm:final}
  Let\/ $M$ be the Markov matrix of a discrete-time MCCP for\/
  $S=\{ 1, 2, \ldots , N\}$, so a\/ \textnormal{CM} matrix with 
  parameter vector\/ $p = (p^{\pa}_{K})^{\pa}_{K\subseteq S}$. If\/ $M$ is
  non-singular, which is equivalent with\/ $p^{\pa}_{\varnothing} > 0$,
  it has a real logarithm, $R$, in the form of the principal
  matrix logarithm.
  
  This\/ $R$ is a triangular matrix with zero row sums and real
  spectrum. It satisfies Property\/ \textnormal{CG} according to
  Definition~$\ref{def:prop-PG}$, with the parameter vector\/
  $r = (r^{\pa}_{K})^{\pa}_{K\subseteq S}$ from
  Proposition~$\ref{prop:set-to-part}$. Further, $R$ is a Markov
  generator if and only if\/ $r^{\pa}_{K} \geqslant 0$ holds for all\/
  $\varnothing\ne K \subseteq S$, in which case\/ $M$ is embeddable
  into a continuous-time\/ \textnormal{MCCP}.
  
  Finally, $R$ is the only real matrix logarithm of\/ $M$ with
  Property\/ \textnormal{CG}. In the generic case that the spectrum
  of\/ $M$ is simple, no other real logarithm of\/ $M$ exists, and\/
  $M$ is embeddable if and only if\/ $R$ is a Markov generator.
\end{theorem}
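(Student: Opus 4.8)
The plan is to assemble the theorem from the results of Sections~\ref{sec:matrices}--\ref{sec:pos}, since each assertion has essentially been prepared already; the proof is therefore one of collation rather than of new computation. First I would invoke \eqref{eq:det}, which records the equivalence of non-singularity with $p^{\pa}_{\varnothing}>0$. Under this hypothesis, Proposition~\ref{prop:real-log-PG} applies directly and furnishes the principal matrix logarithm $R=\log(\one+A)$ from \eqref{eq:log} as an element of the incidence algebra with zero row sums. Its triangular form is inherited from that of $A$ and its powers, and, a triangular matrix having its diagonal entries as eigenvalues, the spectrum of $R$ is automatically real.

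Next I would record, again from Proposition~\ref{prop:real-log-PG}, that $R$ satisfies Property~CG with a unique real parameter vector $r$, and that this vector is precisely the one whose closed form is given in Proposition~\ref{prop:set-to-part} (equivalently, in \eqref{eq:magic} or \eqref{eq:para-variant}). The generator criterion then follows as in Theorem~\ref{thm:log-relation}: since $r^{\pa}_{K}=R^{\pa}_{\varnothing K}$ for $K\ne\varnothing$, the matrix $R$ can be a Markov generator only if these entries are non-negative, and conversely Eq.~\eqref{eq:def-prop-PG} propagates non-negativity from this single row to all off-diagonal entries. When they are non-negative, $R$ is a CG rate matrix, and Lemma~\ref{lem:easy-2} shows that $\ee^{R}$ is a CM matrix; the same argument applied to $t\ts R$ makes $\ee^{t\ts R}$ a CM matrix for every $t\geqslant 0$, so $M=\ee^{R}$ is embeddable into the continuous-time MCCP generated by $R$, as claimed.

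For the uniqueness claims I would reason as follows. Suppose $R'$ is any real logarithm of $M$ that also has Property~CG. By Corollary~\ref{coro:diag}, all CG matrices commute, so $R$ and $R'$ commute and hence $\ee^{R-R'}=\ee^{R}\ee^{-R'}=M M^{-1}=\one$. Now $R-R'$ is again a CG matrix, hence diagonalisable by Corollary~\ref{coro:diag} and, being triangular within the incidence algebra, of real spectrum; each of its eigenvalues $\nu$ therefore satisfies the \emph{real} relation $\ee^{\nu}=1$, which forces $\nu=0$. A diagonalisable matrix all of whose eigenvalues vanish is the zero matrix, so $R=R'$, proving that $R$ is the only CG logarithm. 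Finally, in the generic case that $\sigma(M)$ is simple, the eigenvalues $\lambda^{\pa}_{K}$ are distinct and lie in $(0,1]$, so Fact~\ref{fact:Culver} guarantees that the real logarithm of $M$ is unique without qualification; combined with Theorem~\ref{thm:log-relation}, this yields that $M$ is embeddable if and only if $R$ is a Markov generator.

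The one step demanding genuine care is this last uniqueness argument among CG logarithms, where one must combine the commutativity from Corollary~\ref{coro:diag} with the reality of the CG spectrum to force a CG matrix $R-R'$ with $\ee^{R-R'}=\one$ to vanish; every remaining claim is obtained by quoting the earlier propositions.
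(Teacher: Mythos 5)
Your proof is correct and follows essentially the same route as the paper: a collation of Corollary~\ref{coro:no-go}, Proposition~\ref{prop:real-log-PG}, Theorem~\ref{thm:log-relation}, Lemma~\ref{lem:easy-2} and Proposition~\ref{prop:set-to-part}. The only divergence is the uniqueness of the CG logarithm, where the paper's formal proof recovers the parameter vector $r'$ from the forced spectrum via Eq.~\eqref{eq:para-variant}, whereas you use the commutation argument $\ee^{R-R'}=\one$ combined with diagonalisability and reality of the CG spectrum --- an argument the paper itself presents in the discussion preceding the theorem, so the two are interchangeable here.
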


\begin{proof}
  The first part follows from Corollary~\ref{coro:no-go} and
  Theorem~\ref{thm:log-relation}, and the second from the same theorem
  in conjunction with Eq.~\eqref{eq:para-variant} and
  Proposition~\ref{prop:set-to-part}.
 
  If $R'$ is any real logarithm of $M$ in triangular form, it also has
  real spectrum because its eigenvalues are the diagonal
  elements. When it also has Property CG, its unique parameter vector
  $r'$ follows from the spectrum by Eq.~\eqref{eq:para-variant}, and
  must then agree with $r$, hence $R'=R$.  The final claim is then
  clear.
\end{proof}

If $M$ has a degenerate spectrum and a sufficiently small determinant,
other real logarithms of $M$ are possible, but never with Property
CG. In this sense, Theorem~\ref{thm:final} gives a complete answer to
the embedding problem, both in the generic case of simple spectrum and
in general, then under the constraint that an embedding should be into
a model of the same type (meaning MCCP in our case). One can say more
also in the case of a degenerate spectrum, by comparing the
centralisers of $M$ and any real logarithm of it (see \cite{Culver} as
well as \cite{BS2} and references therein for some tools). In fact, an 
embedding of $M$ always implies CG embedding, by a standard 
approximation argument fom \cite{Davies}. We leave further details to 
the interested reader.

%

\section*{Acknowledgements}

It is our pleasure to thank Jeremy Sumner for interesting discussions
on the embedding problem and for various helpful comments on the
manuscript.  We also thank an anonymous referee for 
thoughtful suggestions, which helped us to improve the
presentation. This work was supported by the German Research
Foundation (DFG), within the CRC 1283 \mbox{(Project ID 317210226)} 
at Bielefeld University.

\smallskip


\begin{thebibliography}{99}
\small

\bibitem{Aigner}
M.~Aigner,
\textit{Combinatorial Theory}, reprint,
Springer, Berlin (1997).

\bibitem{EMB}
E.~Baake and M.~Baake,
Ancestral lines under recombination, in:
\textit{Probabilistic Structures in Evolution},
eds.\ E.\ Baake and A.\ Wakolbinger, 
EMS Press, Berlin (2021), pp.\ 365--382;
\texttt{arXiv:2002.08658}.

\bibitem{BS2}
M.~Baake and J.~Sumner,
On equal-input and monotone Markov matrices,
\textit{J.\ Appl.\ Probab.} \textbf{54} (2022) 460--492;
\texttt{arXiv:2007.11433}.

\bibitem{BS3}
M.~Baake and J.~Sumner,
Embedding of Markov matrices for $d\leqslant 4$,
\textit{J.\ Math.\ Biol.} \textbf{89} (2024) 23:1--45; 
\texttt{arXiv:2311.02596}.

\bibitem{BS4}
M.~Baake and J.~Sumner,
On the algebra of equal-input matrices in time-inhomogeneous
Markov flows, \textit{Docum.\ Math.} \textbf{30} (2025) 455--474;
\texttt{arXiv:2404.11222}.

\bibitem{CFR}
M.~Casanellas, J.~Fern\'{a}ndez-S\'{a}nchez and J.~Roca-Lacostena,
The embedding problem for Markov matrices,
\textit{Public.\ Matem.} \textbf{67} (2023) 411--445;
\texttt{arXiv:2005.00818}.

\bibitem{Culver}
W.J.~Culver,
On the existence and uniqueness of the real logarithm
of a matrix, \textit{Proc.\ Amer.\ Math.\ Soc.} 
\textbf{17} (1966) 1146--1151.

\bibitem{Davies}
E.B.~Davies,
Embeddable Markov matrices,
\textit{Electronic J.\ Probab.} \textbf{15} (2010)
paper 47, 1474--1486;
\texttt{arXiv:1001.1693}.

\bibitem{ER}
T.~Eisner and A.~Radl,
Embeddability of real and positive operators,
\textit{Lin.\ Multilin.\ Alg.} \textbf{70} (2022) 3747--3767;
\texttt{arXiv:2003:08186}.

\bibitem{Elfving}
G.~Elfving,
Zur Theorie der Markoffschen Ketten,
\textit{Acta Soc.\ Sci.\ Fennicae}
\textbf{A2} (1937) 1--17.

\bibitem{Higham}
N.J.~Higham, 
\textit{Functions of Matrices: Theory and Computation},
SIAM, Philadelphia, PA (2008).

\bibitem{HJ}
R.A.~Horn and C.R.~Johnson, \textit{Matrix Analysis},
2nd ed., Cambridge University Press, Cambridge (2013).

\bibitem{Joh}
S.~Johansen,
Some results on the imbedding problem for finite Markov chains,
\textit{J.\ London Math.\ Soc.} \textbf{8} (1974) 345--351.

\bibitem{King}
J.F.C.~Kingman,
The imbedding problem for finite Markov chains,
\textit{Z.\ Wahrscheinlichkeitsth.\ verw.\ Geb.}
\textbf{1} (1962) 14--24.

\bibitem{Neal}
P.~Neal,
The generalised coupon collector problem,
\textit{J.~Appl.~Prob.}
\textbf{45} (2008) 621--629.

\bibitem{Rudin}
W.~Rudin,
\textit{Functional Analysis}, 2nd ed.,
McGraw-Hill, New York (1991).

\bibitem{SchillingHenze}
J.~Schilling and N.~Henze,
\textit{Two Poisson limit theorems for the coupon collector's
  problem with group drawings},
\textit{J.\ Appl.\ Prob.} \textbf{58} (2021) 966--977.

\bibitem{Spiegel}
E.~Spiegel and C.J.~O'Donnell,
\textit{Incidence Algebras}, Dekker, New York (1997).

\bibitem{Walter}
W.~Walter,
\textit{Ordinary Differential Equations}, 
Springer, New York (1998).

\end{thebibliography}
\end{document}